\newtheorem{thm}{Theorem}[section]
\newtheorem{prop}[thm]{Proposition}
\newtheorem{lemma}[thm]{Lemma}
\newtheorem{definition}[thm]{Definition}
\theoremstyle{definition}
\newtheorem{example}[thm]{Example}
\title[A special case of completion invariance]{A special case of completion invariance for the $c_2$ invariant of a graph.}
\author{Karen Yeats}
\address{Department of Combinatorics and Optimization\\University of Waterloo\\Waterloo, ON, Canada}
\email{kayeats@uwaterloo.ca}
\thanks{The author thanks Dmitry Doryn for discussions which inspired this work and the IBS Centre for Geometry and Physics in Korea for their hospitality in hosting those discussions as well as their financial support from IBS-R003-S1 for the visit.  The author thanks Iain Crump for many discussions about graphical interpretations of the $c_2$ invariant over the last few years.  The author also thanks the referees.  She is supported by an Alexander von Humboldt fellowship and an NSERC Discovery grant.}
\subjclass[2010]{Primary 05C31, Secondary 05C30, 81T18}
\begin{document}
\begin{abstract}
  The $c_2$ invariant is an arithmetic graph invariant defined by Schnetz.  It is useful for understanding Feynman periods.  Brown and Schnetz conjectured that the $c_2$ invariant has a particular symmetry known as completion invariance.
This paper will prove completion invariance of the $c_2$ invariant in the case that we are over the field with 2 elements and the completed graph has an odd number of vertices.
  The methods involve enumerating certain edge bipartitions of graphs; two different constructions are needed.
\end{abstract}

\maketitle

\section{Introduction}
The $c_2$ invariant is an arithmetic graph invariant defined by Schnetz \cite{SFq}, see below, which is useful for understanding Feynman periods.  Feynman periods are a class of integrals defined from graphs which are simpler than but closely related to Feynman integrals.  Based on this connection and on computational evidence, there are certain symmetries which the $c_2$ invariant is believed to have.  A key such symmetry, known as completion invariance and defined below, was first conjectured by Brown and Schnetz in 2010 \cite{BrS} and has turned out to be quite difficult to prove.  The main result of this paper, Theorem~\ref{thm main} is the completion invariance of the $c_2$ invariant in the case that $p=2$ and the completed graph has an odd number of vertices.

\medskip

Throughout $K$ will be a connected 4-regular simple graph\footnote{For unexplained graph theory language or notation see \cite{Dbook}.}.  The result of removing any vertex of $K$ is called a \emph{decompletion} of $K$.  Different decompletions will typically be non-isomorphic and $K$ can be uniquely reconstructed from any of its decompletions.  We will say that $K$ is the \emph{completion} of any of its decompletions.  See Figure~\ref{fig decompl} for an example.

Suppose $G$ is a decompletion of $K$.  Then we can view $G$ as a Feynman diagram in $\phi^4$ theory.  For those not familiar with Feynman diagrams and quantum field theory, briefly: The edges of the graphs represent particles; the vertices particle interactions.  This particular quantum field theory has only a quartic interaction (that is the $4$ in $\phi^4$), and so all vertices must have degree 4.  The vertices of $G$ which are no longer 4-regular due to decompletion are taken to have additional external edges attached which represent particles entering or exiting the system.  Completing $G$ then means attaching all the external edges to a new vertex which one can think of as being ``at infinity'' hence corresponding to completion in the geometric sense.  This connection to geometric completion is easily made precise at the level of the Feynman period, see \cite{Sphi4}.  The Feynman integral of a Feynman diagram is the thing one actually wants for physics because it computes the contribution of that Feynman diagram to whatever physical process one is interested in.  For more on perturbative quantum field theory see \cite{iz}.

\begin{figure}
  \includegraphics{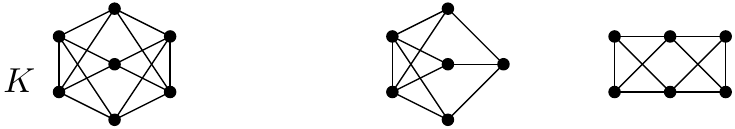}
  \caption{A graph $K$ with two non-isomorphic decompletions}\label{fig decompl}
\end{figure}

Given any graph $G$ (but most interestingly for the present purposes $G$ may be some decompletion of $K$), assign a variable $a_e$ to each edge $e \in E(G)$ and define the (dual) \emph{Kirchhoff polynomial} or first Symanzik polynomial to be
\[
\Psi_G = \sum_{T} \prod_{e\not\in T}a_e
\]
where the sum runs over all spanning trees of $G$.  For example, the Kirchhoff polynomial of a 3-cycle with edge variables $a_1$, $a_2$, $a_3$ is $a_1+a_2+a_3$ since removing any one edge of a cycle gives a spanning tree of the cycle.

Next define the \emph{Feynman period} to be
\[
\int_{a_e\geq 0} \frac{\prod da_e}{\Psi_G^2}\bigg|_{a_1=1}.
\]
This is an affine form of the integral; there is also a projective form, see \cite{Sphi4}.  This integral converges provided $G$ is at least internally 6-edge connected\footnote{That is, any way of removing fewer than 6 edges either leaves the graph connected or breaks the graph into two components one of which is a single vertex.}.  This corresponds to the Feynman diagram having no subdivergences because edge cuts with four or fewer edges, other than ones giving isolated vertices, represent sub-processes which yield divergent integrals.  There has been a lot of interest in the Feynman period because it is a sensible algebro-geometric, or even motivic, object \cite{bek,BrSinform,Brcosmic,Brbig,Mmotives,Sphi4}, but it is also a key piece of the Feynman integral.  It is a sort of coefficient of divergence for the Feynman integral and has the benefit of not depending on the many parameters and choices which the full Feynman integral depends on, so it is mathematically much tidier.  None-the-less it still captures some of the richness of Feynman integrals.  This is best illustrated by the number theoretic richness of the numbers which Feynman periods give, see \cite{bek,BrSinform,bkphi4,Snumbers, Sphi4}.

Given these connections to many deep and difficult things it should not be surprising that Feynman periods are still difficult to understand and compute, and so in \cite{SFq} Schnetz introduced the $c_2$ invariant in order to better understand the Feynman period.

\begin{definition}
  Let $p$ be a prime, let $\mathbb{F}_p$ be the finite field with $p$ elements, let $G$ be a connected graph with at least 3 vertices, and let $[\Psi_G]_p$ be the number of $\mathbb{F}_p$-rational points in the affine variety $\{\Psi_G=0\} \subseteq \mathbb{F}_p^{|E(G)|}$.  Then the \emph{$c_2$-invariant} of $G$ at $p$ is
  \[
  c_2^{(p)}(G) = \frac{[\Psi_G]_p}{p^2} \mod p.
  \]
\end{definition}
That this is well defined is proved in \cite{SFq}.  The same definition can be made for prime powers, but we will stick to primes $p$.  The $c_2$ invariant has interesting properties \cite{BrS,BrSY,Dc2,D4face}, yields interesting sequences in $p$ such as coefficient sequences of modular forms \cite{BrS3,Lmod}, and predicts properties of the Feynman period.  A simple and striking example of the last of these is that the $c_2^{(p)}(G)=0$ for all $p$ corresponds to when the Feynman period apparently has a drop in transcendental weight relative to the size of the graph\footnote{Given that proving transcendentality for single zeta values, much less multiple zeta values or other more exotic numbers appearing in Feynman periods, is completely out of reach, one must recast this more formally in order to make a precise statement or prove a precise result along these lines, see \cite{BrS,SFq}.}.  For example the two decompleted graphs in Figure~\ref{fig decompl} both have $c_2^{(p)}=0$ for all $p$ and both have period $36\zeta(3)^2$ which has weight $3+3=6$ while some other graphs with the same number of edges have weight $7$, see \cite{Sphi4}.

The $c_2$ invariant should have something to do with the Feynman period because both counting points and taking period integrals are controlled by the geometry of the variety of $\Psi_G$.  If the Feynman periods are nice then the point counts as a function of $p$ should be nice and vice versa.  More specifically, inspired by known Feynman periods at the time being multiple zeta values\footnote{This is no longer the case unless there are some outrageous identities in play -- as with transcendentality questions for the multiple zeta values themselves, proving an absence of relations is almost impossibly hard even when we have solid theoretical reasons to think there should be none.}, Kontsevich informally conjectured that $[\Psi_G]_p$ should be a polynomial in $p$.  This turned out to be very false \cite{BrBe}.  The $c_2$ invariant is one measure of whether or how badly Kontsevich's conjecture fails for a given graph; if it holds for that graph then the $c_2$ invariant is the quadratic coefficient of the polynomial, thus explaining the $2$ in $c_2$.

The $c_2$ invariant has or is believed to have all of the symmetries of the Feynman period.  In particular, it is known that the Feynman periods of two graphs with the same completion are the same, see \cite{Sphi4}.  The $c_2$ invariant was also conjectured by Brown and Schnetz in 2010 (Conjecture 4 of \cite{BrS}) to have this completion symmetry.  This is arguably the most interesting open problem on the arithmetic of Feynman periods.  Very little progress has been made.  We do know that if $G$ has two triangles sharing an edge then the question of completion invariance for $c_2^{(p)}(G)$ can be reduced to completion invariance of a particular smaller graph.  This is known as double-triangle reduction, see Corollary 34 of \cite{BrS}. 

The main result of this paper puts the first crack into the conjecture itself, proving it in the special case when $p=2$ and the completed graph has an odd number of vertices: 

\begin{thm}\label{thm main}
  Let $K$ be a connected 4-regular graph with an odd number of vertices.
  Let $v$ and $w$ be vertices of $K$.  Then $c_2^{(2)}(K-v) = c_2^{(2)}(K-w)$. 
\end{thm}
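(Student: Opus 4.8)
The plan is to reduce the statement to a congruence between two point counts modulo $8$ and then to show that this count is an invariant of $K$ rather than of the chosen decompletion. First I would unwind the definition over $\mathbb{F}_2$. Writing $G = K-v$, set $N(G) = \#\{a \in \mathbb{F}_2^{E(G)} : \Psi_G(a) \neq 0\}$, so that $[\Psi_G]_2 = 2^{|E(G)|} - N(G)$. Since $K$ is $4$-regular and simple on $n \geq 5$ vertices, $|E(K-v)| = 2n - 4 \geq 6$, hence $8 \mid 2^{|E(G)|}$ and $c_2^{(2)}(G) \equiv N(G)/4 \pmod 2$; in particular well-definedness of $c_2$ already forces $4 \mid N(G)$. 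Because $K-v$ and $K-w$ have the same number of edges, it suffices to prove $N(K-v) \equiv N(K-w) \pmod 8$.

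Second, I would give the count a purely combinatorial meaning. Over $\mathbb{F}_2$, for $a \in \mathbb{F}_2^{E(G)}$ let $Z = \{e : a_e = 0\}$; then $\Psi_G(a) = \#\{T : Z \subseteq T\} \bmod 2$, where $T$ runs over spanning trees of $G$. Thus $\Psi_G(a) \neq 0$ exactly when $Z$ is a forest and $G/Z$ has an odd number of spanning trees, equivalently, by the bicycle-space criterion for spanning-tree parity, when $G/Z$ has trivial bicycle space. Hence $N(G)$ enumerates edge bipartitions $E(G) = Z \sqcup Z^c$ subject to this parity condition, which are the \emph{edge bipartitions} of the abstract. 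Here the hypotheses enter: $4$-regularity of $K$ makes all degrees even, so over $\mathbb{F}_2$ the Laplacian of $K$ coincides with its adjacency matrix, and the Laplacian of $K-v$ is the adjacency matrix of $K-v$ plus the diagonal indicator of the neighbourhood of $v$; moreover $n$ odd forces the cycle rank $\ell(K-v) = n-2$ to be odd, so the bilinear forms controlling the parity live in odd dimension.

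Third, and this is the heart, I would show that $N(K-v) \bmod 8$ is independent of $v$ by matching the admissible bipartitions of $K-v$ with those of $K-w$. The natural device is to lift each admissible bipartition of a decompletion to data attached to $K$ itself, using the four edges at the removed vertex, and then to read it back down at the other vertex. I expect this matching to require the \textbf{two different constructions} advertised in the abstract: one correspondence handling the generic bipartitions and a second handling a degenerate family on which the first construction fails, with the odd parity of $\ell$ guaranteeing that the exceptional contributions of the two constructions cancel in pairs.

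The step I expect to be the main obstacle is precisely this matching to the accuracy of $8$. Modulo $2$ the count $N(G)$ is governed by a single determinant and is easy to control, and even modulo $4$ one can often argue by a single involution; but $c_2$ sees the count modulo $8$, so a crude correspondence that is merely two-to-one, or that leaves an uncontrolled even discrepancy, does not suffice. The delicate point will be to organise the exceptional bipartitions — those where a contraction creates the forbidden cycle, or where the neighbourhood diagonal flips the spanning-tree parity — into pairs or quadruples whose sizes are visibly the same for $v$ and for $w$, and it is here that having an odd number of vertices, hence odd cycle rank, should be used to eliminate the stray fixed configurations that would otherwise spoil the congruence.
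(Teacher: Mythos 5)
Your first two steps are sound: the reduction to $N(K-v)\equiv N(K-w)\pmod 8$ is correct arithmetic, and the $\mathbb{F}_2$ interpretation of $\Psi_G(a)\neq 0$ (the zero-set $Z$ is a forest and $G/Z$ has an odd number of spanning trees) is a correct, if standard, reformulation. But everything after that is a statement of intent, not a proof. The entire content of the theorem lives in your third step, and there you construct nothing: no map lifting a bipartition of $K-v$ to data on $K$, no rule for reading it back down at $w$, no identification of the ``degenerate family,'' no verification that anything is a bijection or an involution, and no argument for where the parity of the vertex count actually enters. Phrases like ``I expect this matching to require two constructions'' and ``the delicate point will be to organise the exceptional bipartitions into pairs or quadruples'' name the difficulty without resolving it. Incidentally, your identification of these parity-conditioned bipartitions with the edge bipartitions of the abstract is also wrong: those are splittings of $E$ into a spanning tree and a spanning 2-forest compatible with prescribed vertex partitions, a different object arising from a product of two graph polynomials.

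The route you chose also abandons the tools that make the problem tractable. Working with the raw point count forces you to control a quantity modulo $8$, which is exactly the hard bookkeeping the paper avoids: Lemma 24 of Brown--Schnetz expresses $c_2^{(p)}$ directly as a point count of $\Psi^{ik,jk}_G\Psi^{i,j}_{G,k}$ (no division by $p^2$), and the Chevalley--Warning coefficient lemma converts this at $p=2$ into a single parity -- the parity of the number of ways to split the edges of a graph into a spanning tree and a compatible spanning 2-forest. A second structural move you miss entirely is the reduction, via connectivity of $K$, to the case where $v$ and $w$ are adjacent, so that both decompletions can be compared on the common graph $K-\{v,w\}$ (with cases according to $0$, $1$, or $2$ common neighbours). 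Only then do the ``two constructions'' have concrete meaning: fixed-point-free involutions obtained by swapping edges around a control vertex, and a compatible-cycle swapping rule that builds an auxiliary graph in which every vertex has odd degree, so the handshake lemma forces an even number of vertices; the hypothesis that $K$ has an odd number of vertices is used precisely to make the relevant degree sums odd in that second construction. Without supplying arguments at this level of concreteness, your proposal is a plausible research plan whose main obstacle -- as you yourself say -- is the theorem itself.
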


The basic approach takes a graph theoretic perspective on the $c_2$ invariant.  The same approach also underlies the $c_2$ results for families of graphs from \cite{CYgrid, Ycirc}, and the reader may like to look there for further examples.  The strength of the approach is that it is sufficiently different from a more algebraic or geometric approach to possibly make progress where other methods stalled.  The weakness of the approach is that extending beyond $p=2$ will be tricky, perhaps impossible.  See Section~\ref{sec conclusion} for further comments.  The vertex parity restriction is both more mysterious and perhaps more tractable.  Again see Section~\ref{sec conclusion}.  A different graph theoretic perspective on the $c_2$ invariant is given by Crump in \cite{Cphd}.

\medskip

The structure of the paper is as follows.  Section~\ref{sec background} defines the different graph polynomials that will be needed and collects together some important lemmas from other sources.  Section~\ref{sec reduction} then uses these lemmas to reduce the problem of proving the main theorem to a problem of determining the parity of the number of certain edge bipartitions.  The real work of proving Theorem~\ref{thm main} then comes in Sections~\ref{sec involutions} and \ref{sec compat cycles}.  The set of these edge bipartitions are divided into five pieces.  Two of the pieces are proved to have even size in Section~\ref{sec involutions} by finding fixed-point free involutions.  The other three pieces are proved to have even size in Section~\ref{sec compat cycles} by giving a cycle swapping rule to transform edge bipartitions.  Finally Section~\ref{sec conclusion} concludes with some comments about the result, the proof, and the way forward.

\section{Background}\label{sec background}

The first step is to define some additional graph polynomials that will be needed in order to prove the main result.

Let $G$ be a graph.  Choose an arbitrary order for the edges and the vertices of $G$ and choose an arbitrary orientation for the edges of $G$.  Let $E$
be the signed incidence matrix of $G$ (with the vertices corresponding to rows and the edges corresponding to columns) with one arbitrary row removed and let $\Lambda$ be the diagonal matrix with the edge variables of $G$
on the diagonal.  Let
\[
M = \begin{bmatrix} \Lambda & E^t \\ -E & 0 \end{bmatrix}.
\]
Then
\[
\det M = \Psi_G.
\]
This can be proved by expanding out the determinant, see \cite{Brbig} Proposition 21, or by using the Schur complement and the Cauchy-Binet formula, see \cite{VY}.  In both cases it comes down to the fact that the square full rank minors of
$E$ are $\pm 1$ for columns corresponding to
the edges of a spanning tree of $G$ which is the essence of the matrix tree theorem.  In this and other ways the matrix $M$ behaves much like the Laplacian matrix of a graph (with variables and one matching row and column removed) but the pieces which make it up are separated out, so call $M$ the \emph{expanded Laplacian} of $G$.

Assume that we have made a fixed choice of orders and orientation so as to define a fixed $M$ in all that follows.  Thus it will not matter whether we talk of edges or edge indices.  In particular 
we will use $G/e$ and $G\backslash e$ for the contraction and deletion respectively in the graph $G$ with the same meaning whether $e$ is an edge or an edge index. 

If $I$ and $J$
are sets of edge indices then let $M(I, J)$ be the matrix $M$ with the rows indexed by elements of $I$ removed and the columns indexed by elements of $J$ removed.  Brown in \cite{Brbig} defined the following Dodgson polynomials
\begin{definition}
Let $I$, $J$, and $K$ be sets of edge indices with $|I|=|J|$.  Define
\[
\Psi^{I,J}_{G,K} = \det M(I, J)|_{a_e=0 \text{ for } e\in K}.
\]
\end{definition}
We may leave out the $G$
subscript when the graph is clear and leave out the $K$ when $K= \varnothing$.

Dodgson polynomials have nice contraction-deletion properties and many interesting relations see \cite{Brbig}.  Making different choices in the construction of $M$ may change the overall sign of a Dodgson polynomial, but since we will be concerned with counting zeros of these polynomials the overall sign is of no concern.

Dodgson polynomials can also be expressed in terms of spanning forests; this is a consequence of the all minors matrix tree theorem \cite{Chai}.  The following spanning forest polynomials are convenient for this purpose.
\begin{definition}
  Let $P$ be a set partition of a subset of the vertices of $G$.  Define
  \[
  \Phi^P_G = \sum_{F}\prod_{e \not\in F}a_e
  \]
  where the sum runs over spanning forests $F$ of $G$ with a bijection between the trees of $F$ and
the parts of $P$
where each vertex in a part lies in its corresponding tree.
\end{definition}
Note that trees consisting of isolated vertices are permitted.  In most of the argument we will be working with spanning forest polynomials where $P$ has exactly 2 parts.  The corresponding spanning forests thus have exactly two parts and will be known as \emph{2-forests}.  For example, if $G$ is as in Figure~\ref{fig spanning for eg} then $\Phi_G^{\{v_1,v_2\},\{v_3\}} = (e+d)(ca+cb + ab + fb+gb)$

\begin{figure}
  \includegraphics{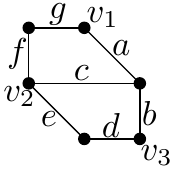}
  \caption{A graph for a spanning forest example.}\label{fig spanning for eg}
\end{figure}

The precise relationship between Dodgson polynomials and spanning forest polynomials is given in the following proposition.
\begin{prop}[Proposition  12  from  \cite{BrY}]
Let $I$,$J$, and $K$ be  sets  of  edge  indices  of
$G$
with $|I|=|J|$.  Then
\[
\Psi^{I,J}_{G,K} = \sum\pm \Phi^P_{G\backslash(I\cup J\cup K)}
\]
where the sum runs over all set partitions $P$ of the end points of edges of $(I \cup J\cup K) \backslash (I\cap J)$ such that all the forests corresponding to $P$ become spanning trees in both $G\backslash I/(J\cup K)$ and $G\backslash J/(I\cup K)$.
\end{prop}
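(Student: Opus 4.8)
The plan is to prove the identity by directly expanding the determinant defining $\Psi^{I,J}_{G,K}$ and reading off the combinatorics; this is the concrete form the all minors matrix tree theorem \cite{Chai} takes in the setting of the expanded Laplacian. Index the rows and columns of $M$ by the edge indices $E(G)$ together with the $|V(G)|-1$ vertex indices surviving the removed row. In the permutation expansion of $\det M(I,J)$, the zero block forces every surviving vertex row to be matched, through the $-E$ block, to a distinct edge column, and forces exactly $|V(G)|-1$ of the surviving edge rows to be matched, through the $E^t$ block, to the vertex columns. The remaining surviving edge rows must then be matched diagonally through $\Lambda$, each contributing its variable $a_e$. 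Setting $a_e=0$ for $e\in K$ kills any term using a diagonal entry indexed by $K$, so the diagonally matched set $S$ satisfies $S\subseteq E(G)\setminus(I\cup J\cup K)$.

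Tracking which edge columns and rows are consumed by the two off-diagonal blocks produces two square submatrices of the reduced incidence matrix $E$: one on the column set $C=E(G)\setminus(J\cup S)$, selected through the $-E$ block by the removed columns $J$, and one on the row set $R=E(G)\setminus(I\cup S)$, selected through the $E^t$ block by the removed rows $I$. Each such minor is $\pm 1$ when its edge set is a spanning tree of $G$ and $0$ otherwise, so the only surviving terms come from choices of $S$ for which both $C$ and $R$ are spanning trees. Writing $F=E(G)\setminus(I\cup J\cup K)\setminus S$ and using that spanning-tree edges may freely be contracted, the condition ``$C$ is a spanning tree'' becomes ``$F$ is a spanning tree of $G\backslash J/(I\cup K)$'', while ``$R$ is a spanning tree'' becomes ``$F$ is a spanning tree of $G\backslash I/(J\cup K)$''; this is precisely where the asymmetry between the removed rows $I$ and the removed columns $J$ enters. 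Hence
\[
\Psi^{I,J}_{G,K}=\sum_{S}\pm\prod_{e\in S}a_e=\sum_{S}\pm\prod_{e\notin F}a_e,
\]
the sum running over all $S$ for which $F$ is simultaneously a spanning tree of those two contracted graphs.

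Each such $F$ is a spanning forest of $G\backslash(I\cup J\cup K)$, and restoring the contracted edges must knit its trees into a single spanning tree on both sides; the record of which endpoints of the edges of $(I\cup J\cup K)\setminus(I\cap J)$ lie in a common tree of $F$ is exactly a set partition $P$ of those endpoints. Edges in $I\cap J$ are contracted in both graphs and so influence neither the surviving variables nor the merging of trees, which is why their endpoints are excluded. Grouping the monomials $\prod_{e\notin F}a_e$ according to $P$ and recognizing each group as $\Phi^P_{G\backslash(I\cup J\cup K)}$ produces $\sum_P\pm\,\Phi^P_{G\backslash(I\cup J\cup K)}$, as claimed.

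I expect the sign bookkeeping to be the main obstacle. The sign of each term is the product of the permutation sign with the signs of the two incidence minors $\det(E|_C)$ and $\det(E|_R)$, and to extract a single $\pm$ in front of each $\Phi^P$ one must verify that all choices of $S$ yielding the same partition $P$ give the same overall sign, while distinct $P$ may differ. Checking this cleanly is fiddly given the fixed orientation and ordering, so a tidier route that avoids much of the casework is induction on $|I\cup J\cup K|$, matching the contraction–deletion recursion
\[
\Psi^{I,J}_{G,K}=a_e\,\Psi^{I\cup\{e\},\,J\cup\{e\}}_{G,K}+\Psi^{I,J}_{G,K\cup\{e\}}
\]
for an edge $e\notin I\cup J\cup K$ against the analogous recursion for spanning forest polynomials. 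For the use made of this proposition in the present paper only the zero locus of $\Psi^{I,J}_{G,K}$ is needed, so the precise signs are ultimately immaterial; nonetheless they must be shown consistent in order to establish the stated polynomial identity.
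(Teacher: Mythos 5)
The first thing to note is that the paper does not actually prove this proposition: it is quoted verbatim from \cite{BrY} (it is Proposition 12 there), and the only justification offered in the present paper is the remark that such expansions are ``a consequence of the all minors matrix tree theorem \cite{Chai}.'' So your attempt is being measured against a citation rather than a written-out argument. With that understood, your direct expansion of $\det M(I,J)$ is a legitimate route, and the combinatorial core of it is sound: the zero block forces the matching structure you describe, setting $a_e=0$ for $e\in K$ restricts the diagonal set to $S\subseteq E(G)\setminus(I\cup J\cup K)$, the two incidence-matrix minors vanish unless $C=E(G)\setminus(J\cup S)$ and $R=E(G)\setminus(I\cup S)$ are spanning trees of $G$, and rewriting these conditions in terms of $F$ and the two contracted graphs produces exactly the indexing set in the statement. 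Two corrections along the way: edges of $I\cap J$ are \emph{deleted} in both minors, not ``contracted in both'' (the paper itself stresses that an edge in both $I$ and $J$ cannot be contracted), although your conclusion that their endpoints drop out of $P$ is right; and your ``the condition becomes'' is a genuine equivalence only when the contracted sets $(I\cup K)\setminus J$ and $(J\cup K)\setminus I$ are acyclic, which is the reading of ``become spanning trees'' under which the statement is true (otherwise the determinant side is $0$ while a naive quotient reading of the right-hand side need not vanish).

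The genuine gap is the one you flagged and then set aside: sign coherence within each partition class. The proposition does not merely assert that the surviving monomials are those of the admissible $\Phi^P$; it asserts that each admissible $P$ contributes $\pm\Phi^P$ \emph{as a whole}, i.e.\ every forest $F$ inducing the same partition $P$ carries the same sign $\epsilon(S)\det(E|_C)\det(E|_R)$. Your expansion, as written, only yields $\sum_F \pm\prod_{e\notin F}a_e$ with a priori forest-dependent signs, which cannot be regrouped into spanning forest polynomials; the statement that the sign depends only on the linking pattern $P$ and not on the individual forest is precisely the nontrivial content of Chaiken's all-minors matrix tree theorem that \cite{BrY} invokes, so deferring it defers the heart of the proof. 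Your fallback --- induction on $|I\cup J\cup K|$ via the recursion $\Psi^{I,J}_{G,K}=a_e\Psi^{Ie,Je}_{G,K}+\Psi^{I,J}_{G,Ke}$, which is correct up to the usual sign ambiguity in defining Dodgson polynomials --- is a reasonable plan, but it is only a plan: one must exhibit the matching signed recursion on the $\Phi^P$ side and check base cases, and that bookkeeping is exactly the work being avoided. As you note, modulo $2$ (all this paper uses) the issue evaporates, but that establishes a weaker statement than the one asserted.
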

The signs appearing in the proposition can be determined, see \cite{BrY}, however they are of no concern for the present since we will be working modulo 2.  Note that if an edge index is in both $I$ and $J$ then it is deleted in both $G\backslash I$ and in $G\backslash J$ and so cannot then be contracted; in the contraction simply ignore edge indices whose edges are no longer there.

The instances of this proposition which will be needed below also serve as a good example of these objects and how to manipulate them.
\begin{example}\label{eg psi to phi}
  Suppose we have a graph $G$ with a 3-valent vertex $u$ and let the edges incident to $u$ be $1$, $2$, and $3$, as in the first graph in Figure~\ref{fig eg G}.

  \begin{figure}
    \includegraphics{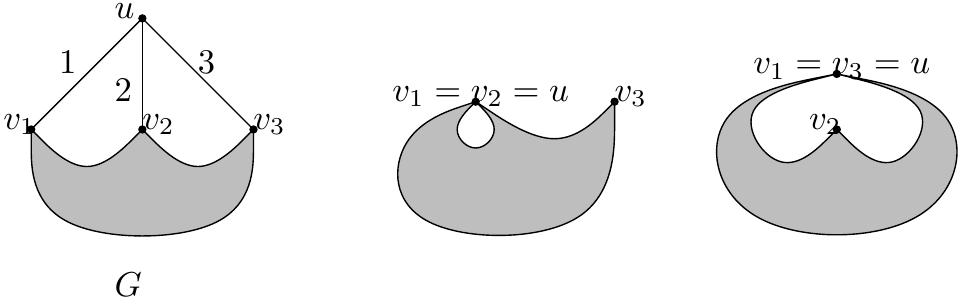}
    \caption{An example graph for some Dodgson polynomial to spanning forest polynomial computations and two of its minors.}\label{fig eg G}
  \end{figure}

  Consider the Dodgson polynomial $\Psi^{12, 13}_G$.  To find its expression in terms of spanning forest polynomials we need to look for sets of edges of $G\backslash 123$ which give a spanning tree in $G\backslash 12/3$ and in $G\backslash 13/2$.  These smaller graphs are each isomorphic to $G-u$ and so we obtain
  \[
  \Psi^{12,13}_G = \Psi_{G-u} = \Phi^{\{u\}, \{v_1,v_2,v_3\}}_{G\backslash 123} = \Phi^{\{v_1,v_2,v_3\}}_{G-u}.
  \]

  Consider the Dodgson polynomial $\Psi^{2,3}_{G,1}$.  To find its expression in terms of spanning forest polynomials we need to look for sets of edges of $G\backslash 123$ which give a spanning tree in $G\backslash 2/13$ and in $G\backslash 3/12$.  These two smaller graphs are the second and third graphs in Figure~\ref{fig eg G}.  If a set of edges is a forest for both of these graphs then on $G\backslash 123$ we must have $v_1$ and $v_2$ in different trees as well as $v_1$ and $v_3$ in different trees; $u$ must also be in a different tree from all the other vertices.  Furthermore, to get a tree in the two minors of Figure~\ref{fig eg G} the spanning forest of $G\backslash 123$ must have exactly three trees, including the tree consisting of the isolated vertex $u$ alone.  There is one partition of $\{u, v_1, v_2, v_3\}$ which satisfies these properties, namely $\{u\},\{v_1\},\{v_2,v_3\}$ and all forests compatible with this partition give trees in the two minors of Figure~\ref{fig eg G}.  Therefore
  \[
  \Psi^{2,3}_{G,1} = \Phi^{\{u\}, \{v_1\}, \{v_2,v_3\}}_{G\backslash 123}.
  \]
  Since the vertex $u$ is isolated in $G\backslash 123$, we can rewrite this without $u$ as
  \[
  \Psi^{2,3}_{G,1} = \Phi^{\{v_1\}, \{v_2,v_3\}}_{G-u}.
  \]
\end{example}

We can use the particular Dodgson polynomials from the example to compute the $c_2$ invariant more easily.  Continue for the next two lemmas with the notation $[F]_p$ for the number of $\mathbb{F}_p$-rational points on the affine variety defined by the polynomial $F$ reduced modulo $p$.  The polynomials we will deal with will always come from graphs and so the affine space in question will be of dimension the number of edges of the graph.

\begin{lemma}[Lemma 24 of \cite{BrS} along with inclusion-exclusion]\label{lem D3}
  Suppose $G$ is a graph with  $2 + |E(G)| \leq 2|V(G)|$.  Let $i,j,k$ be distinct edge indices of $G$ and let $p$ be a prime.  Then
  \[
  c_2^{(p)}(G) = -[\Psi^{ik,jk}_{G}\Psi^{i,j}_{G,k}]_p \mod p.
  \]
\end{lemma}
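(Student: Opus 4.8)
The plan is to reduce everything to a point count modulo $p^3$. Since $c_2^{(p)}(G) = [\Psi_G]_p/p^2 \bmod p$, it suffices to determine $[\Psi_G]_p$ modulo $p^3$ and read off the coefficient of $p^2$. The only elementary tool needed is that $\Psi_G$, and each Dodgson polynomial, has degree at most one in every edge variable. Hence for a polynomial $F = A\,a_e + B$ with $A,B$ free of $a_e$, splitting on whether the leading coefficient $A$ vanishes gives
\[
[F]_p = p^{n-1} - [A]_p + p\,[A,B]_p,
\]
where $n$ is the number of variables and $[A,B]_p$ counts the common zeros of $A$ and $B$. I would apply this to the three edges $i,j,k$ in turn, identifying the coefficients produced at each stage with Dodgson polynomials through the contraction--deletion rule $\Psi^{I,J}_{G} = a_e\,\Psi^{Ie,Je}_G + \Psi^{I,J}_{G,e}$. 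In particular the two branches of the $a_k$-reduction produce the deletion and contraction of $k$, namely $\Psi^{ik,jk}_G = \Psi^{i,j}_{G\backslash k}$ and $\Psi^{i,j}_{G,k} = \Psi^{i,j}_{G/k}$.

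The off-diagonal minor $\Psi^{i,j}_G$ never appears as a bare coefficient, and this is where the genuine input lies. Reducing two variables at once turns $\Psi_G$ into the bilinear form $a_i a_j\,\Psi^{ij,ij}_G + a_i\,\Psi^{i,i}_{G,j} + a_j\,\Psi^{j,j}_{G,i} + \Psi_{G,ij}$, whose degenerate behaviour over $\mathbb{F}_p$ is governed by the quantity $\Psi^{ij,ij}_G\,\Psi_{G,ij} - \Psi^{i,i}_{G,j}\,\Psi^{j,j}_{G,i}$. Applying Desnanot--Jacobi (Dodgson condensation) to the expanded Laplacian specialized at $a_i=a_j=0$ gives
\[
\Psi^{ij,ij}_G\,\Psi_{G,ij} - \Psi^{i,i}_{G,j}\,\Psi^{j,j}_{G,i} = -\,\Psi^{i,j}_G\,\Psi^{j,i}_G,
\]
so the controlling locus is exactly $\{\Psi^{i,j}_G = 0\}$; this is what makes the asymmetric Dodgson polynomials surface. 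This bilinear count together with the $a_k$-reduction is precisely the content of Lemma 24 of \cite{BrS}.

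To extract the coefficient of $p^2$ I would then isolate the order-$p^2$ part of $[\Psi_G]_p$. This is exactly where the hypothesis enters: $2 + |E(G)| \le 2|V(G)|$ is equivalent to $|E(G)| \ge 2\deg\Psi_G$, which is the threshold under which \cite{SFq} guarantees $p^2 \mid [\Psi_G]_p$; the same bound, propagated through the deletions and contractions of $i,j,k$, forces the low-order terms to cancel and the bulk terms $p^{n-1}$ to vanish modulo $p^3$, leaving a $\mathbb{Z}/p$-combination of $[\Psi^{ik,jk}_G]_p$, $[\Psi^{i,j}_{G,k}]_p$, and their common-zero count $[\Psi^{ik,jk}_G,\Psi^{i,j}_{G,k}]_p$. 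Finally, since the zero set of a product is the union of the zero sets, inclusion--exclusion gives
\[
[\Psi^{ik,jk}_G\,\Psi^{i,j}_{G,k}]_p = [\Psi^{ik,jk}_G]_p + [\Psi^{i,j}_{G,k}]_p - [\Psi^{ik,jk}_G,\Psi^{i,j}_{G,k}]_p,
\]
which repackages the surviving combination as $-[\Psi^{ik,jk}_G\,\Psi^{i,j}_{G,k}]_p \bmod p$, matching the claimed sign. Throughout, the overall signs of the Dodgson polynomials are irrelevant, as only their zero counts matter.

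I expect the main obstacle to be the divisibility bookkeeping rather than any single identity. Tracking which power of $p$ each term carries, and in particular verifying that the point counts of the intermediate minors are divisible by the powers needed to drop them modulo $p^3$, requires the \cite{SFq} divisibility to hold not just for $G$ but for the whole family of graphs obtained by deleting and contracting among $i,j,k$. Degenerate cases, where such a minor is disconnected or violates the vertex bound, fall outside that framework and must be handled separately; these are individually easy but are exactly the kind of detail it is easy to overlook.
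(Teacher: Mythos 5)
You should first be aware that the paper does not prove this lemma at all: it is imported wholesale. Lemma 24 of \cite{BrS} gives the congruence in terms of a simultaneous point count, $c_2^{(p)}(G) \equiv [\Psi^{ik,jk}_{G},\Psi^{i,j}_{G,k}]_p \pmod p$ (number of common zeros), and the ``inclusion-exclusion'' in the paper's attribution is precisely the conversion of that common-zero count into the product count, using $[fg]_p = [f]_p+[g]_p-[f,g]_p$ together with Chevalley--Warning applied to each factor. Your proposal instead tries to reprove the Brown--Schnetz lemma from scratch, and the skeleton you describe --- fibering over the variables using linearity, the bilinear reduction in $a_i,a_j$, Desnanot--Jacobi identifying the discriminant as $-\Psi^{i,j}_G\Psi^{j,i}_G = \pm(\Psi^{i,j}_G)^2$, then divisibility arguments --- is indeed the route taken in \cite{BrS}, so you have correctly identified where the content lives.

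However, as a proof there is a genuine gap exactly at the point you defer. The entire substance of Lemma 24 of \cite{BrS} is the verification that, modulo $p^3$, the surviving order-$p^2$ part of $[\Psi_G]_p$ is \emph{exactly} $p^2[\Psi^{ik,jk}_{G},\Psi^{i,j}_{G,k}]_p$ --- coefficient precisely one, with a definite sign --- and not some other $\mathbb{Z}/p$-combination of $[\Psi^{ik,jk}_G]_p$, $[\Psi^{i,j}_{G,k}]_p$ and $[\Psi^{ik,jk}_G,\Psi^{i,j}_{G,k}]_p$; your sketch asserts this bookkeeping works out but gives no way to check the coefficient, and in \cite{BrS} it occupies a chain of lemmas. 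Relatedly, the final ``repackaging'' into $-[\Psi^{ik,jk}_{G}\Psi^{i,j}_{G,k}]_p$ is not a consequence of inclusion-exclusion alone: one needs $[\Psi^{ik,jk}_G]_p \equiv [\Psi^{i,j}_{G,k}]_p \equiv 0 \pmod p$, which follows from Chevalley--Warning because each factor has degree strictly smaller than its number of variables --- and this is a place where the hypothesis $2+|E(G)|\le 2|V(G)|$ is genuinely used, so it should be said explicitly. Finally, your framing of that hypothesis as ``the threshold under which \cite{SFq} guarantees $p^2 \mid [\Psi_G]_p$'' is off: that divisibility holds for every connected graph with at least three vertices (it is what makes $c_2$ well defined in the first place); the hypothesis here is rather what makes the degree bounds in the Chevalley--Warning/Ax steps come out right.
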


Note that decompletions of $4$-regular graphs satisfy the hypotheses of this lemma.  This lemma is useful because we no longer have to divide by $p^2$ but rather can obtain the $c_2$ invariant directly as a point count modulo $p$.

The last lemma we need is a corollary of one of the standard proofs of the Chevalley-Warning theorem, see section 2 of \cite{Ax}.

\begin{lemma}\label{lem cor of CW}
  Let $F$ be a polynomial of degree $N$ in $N$ variables, $x_1, \ldots, x_N$, with integer coefficients. The coefficient of $x_1^{p-1}\cdots x_N^{p-1}$ in $F^{p-1}$ is $[F]_p$ modulo $p$.
\end{lemma}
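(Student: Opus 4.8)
The plan is to run the standard Chevalley--Warning counting argument in the form given by Ax. The starting observation is Fermat's little theorem used as an indicator function: for $a \in \mathbb{F}_p$ one has $a^{p-1} = 1$ when $a \neq 0$ and $a^{p-1} = 0$ when $a = 0$, so $1 - a^{p-1}$ equals $1$ exactly when $a = 0$. Applying this with $a = F(x)$ and summing over the whole affine space, I would write
\[
[F]_p \equiv \sum_{x \in \mathbb{F}_p^N}\bigl(1 - F(x)^{p-1}\bigr) = p^N - \sum_{x \in \mathbb{F}_p^N} F(x)^{p-1} \pmod p.
\]
Since $N \geq 1$ the term $p^N$ vanishes modulo $p$, and the lemma reduces to understanding $\sum_{x} F(x)^{p-1}$ through its monomial contributions.

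The key computational input is the power-sum identity over $\mathbb{F}_p$: for an integer $e \geq 0$ one has $\sum_{a \in \mathbb{F}_p} a^e \equiv -1 \pmod p$ when $e$ is a positive multiple of $p-1$, and $\sum_{a \in \mathbb{F}_p} a^e \equiv 0 \pmod p$ otherwise (the case $e=0$ giving $p \equiv 0$, under the convention $0^0 = 1$); this follows from the cyclicity of $\mathbb{F}_p^\times$. Expanding $F^{p-1} = \sum_{\mathbf{e}} c_{\mathbf{e}}\, x_1^{e_1}\cdots x_N^{e_N}$ and summing termwise, a monomial contributes to $\sum_{x} F(x)^{p-1}$ only if every exponent $e_i$ is a positive multiple of $p-1$, in which case its contribution is $c_{\mathbf{e}}\,(-1)^N$. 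I would then close with a degree count: a surviving monomial needs $e_i \geq p-1$ for all $i$, hence total degree at least $N(p-1)$, while $\deg F^{p-1} = (p-1)\deg F = N(p-1)$ forces the total degree to be exactly $N(p-1)$ and thus every exponent to equal $p-1$. So the only surviving monomial is $x_1^{p-1}\cdots x_N^{p-1}$, giving
\[
[F]_p \equiv -\,(-1)^N\, c \pmod p,
\]
where $c$ is its coefficient in $F^{p-1}$. For $p=2$, the setting of this paper, the sign $(-1)^{N+1}$ is congruent to $1$, so $[F]_2 \equiv c$ exactly as stated.

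There is essentially no genuine obstacle here, as this is a textbook identity underlying the Chevalley--Warning theorem; the only points requiring care are bookkeeping. Specifically I would make sure to treat the $0^0$ convention consistently in the power sums, to check that the inequalities in the degree count are tight enough to pin each exponent to exactly $p-1$, and to track the two sign sources (the factor $(-1)^N$ from the $N$ power sums and the overall minus from $p^N - \sum_x F(x)^{p-1}$). Since everything in this paper is computed modulo $2$, these signs are immaterial for the application and the clean form $[F]_2 \equiv c$ is all that is used.
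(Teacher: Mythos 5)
Your proof is correct and is precisely the approach the paper relies on: the paper gives no proof of its own but cites the standard Chevalley--Warning counting argument of Section 2 of Ax's paper, which is exactly the Fermat-indicator-plus-power-sum computation you carried out. Worth noting that your bookkeeping in fact gives $[F]_p \equiv (-1)^{N+1}\,[x_1^{p-1}\cdots x_N^{p-1}]F^{p-1} \pmod p$, so the lemma as literally stated carries a hidden sign for odd $p$ and even $N$; you are right that this is invisible at $p=2$ (and it is also invisible in the paper's applications, where $N$ is odd), so the statement holds exactly where the paper uses it.
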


This last lemma is particularly useful when used in conjunction with the previous lemma.  Suppose $G$ is a decompletion of a 4-regular graph and $i,j,k$ are distinct edge indices.  Then $\Psi^{ik,jk}_{G}\Psi^{i,j}_{G,k}$ has both degree and number of variables equal to the number of edges of $G\backslash ijk$.  Thus $c_2^{(p)}(G)$ is equal to the coefficient of $x_1^{p-1}\cdots x_N^{p-1}$ in $(\Psi^{ik,jk}_{G}\Psi^{i,j}_{G,k})^{p-1}$ modulo $p$.  In view of this, we will no longer need to consider point counts and so that frees up square brackets for the usual algebraic combinatorics notation for the \emph{coefficient-of} operator.  Namely, the coefficient of a monomial $m$ in a polynomial $F$ is denoted $[m]F$.  Rewriting the previous observation in this notation we get
\[
  c_2^{(p)}(G) = [x_1^{p-1}\cdots x_N^{p-1}](\Psi^{ik,jk}_{G}\Psi^{i,j}_{G,k})^{p-1} \mod p
\]
where $N$ is the number of edges of $G\backslash ijk$.  If we further suppose that $i$, $j$, and $k$ meet at a $3$-valent vertex $u$ and their other ends are $v_1$, $v_2$, and $v_3$ (this case will be sufficient for our purposes) then we can incorporate the computations of Example~\ref{eg psi to phi} to get
\begin{equation}\label{eq key set up}
c_2^{(p)}(G) = [x_1^{p-1}\cdots x_N^{p-1}](\Phi^{\{v_1\}, \{v_2,v_3\}}_{G-u}\Psi_{G-u})^{p-1} \mod p.
\end{equation}
The polynomials do in general depend on the choice of $v_1$, though the coefficient in question modulo $p$ clearly does not.  We will use the freedom of choice of $v_1$ later.

When $p=2$ \eqref{eq key set up} has a particularly nice graphical interpretation which can be derived as follows.
We are interested in the parity of $[x_1\cdots x_N]\Phi^{\{v_1\}, \{v_2,v_3\}}_{G-u}\Psi_{G-u}$, but the coefficient of $x_1\cdots x_N$ in $\Phi^{\{v_1\}, \{v_2,v_3\}}_{G-u}\Psi_{G-u}$ is the number of ways to partition the variables into two monomials, one from $\Phi^{\{v_1\}, \{v_2,v_3\}}_{G-u}$ and one from $\Psi_{G-u}$, since both of these polynomials\footnote{The same idea works for more general products of Dodgson polynomials, but some care must be taken as Dodgson polynomials are in general signed sums of spanning forest polynomials.  For some examples computing in this way see \cite{CYgrid,Ycirc}.} have all monomials appearing with coefficient 1.  The variables correspond to edges, so this is equivalent to counting the number of ways to partition the edges of $G-u$  into two parts, one part when removed gives a spanning tree and the other part when removed gives a spanning 2-forest compatible with $\{v_1\}, \{v_2, v_3\}$.  Swapping the roles of the two parts this is equivalent to counting the number of ways to partition the edges of $G-u$ into two parts, one of which is a spanning tree and the other of which is a spanning 2-forest compatible with $\{v_1\}, \{v_2,v_3\}$.  The $c_2$ invariant at $p=2$ is simply the parity of this count, so we are determining $c_2$ by counting assignments of the edges of $G-u$ to particular spanning trees and forests.

For $p>2$ the same idea works, but we must assign $p-1$ copies of each edge among $p-1$ copies of each polynomial.  We can view this as partitioning the edges of the graph we obtain by taking $G-u$ and replacing each edge with $p-1$ edges in parallel.  There are many more possibilities when partitioning all these multiple edges into $2p-2$ polynomials; the practicalities of working with this approach for $p>2$ are daunting.

%
%
%

\section{Reduction to a combinatorial counting problem}\label{sec reduction}



Take $K$ to be a (fixed) connected $4$-regular graph 
with an odd number of vertices
and take $v$, $w$ to be vertices of $K$.  Since $K$ is connected there is a path between any two vertices of $K$ and thus it suffices to prove Theorem~\ref{thm main} in the case that $v$ and $w$ are joined by an edge.  Thus add as an assumption that $v$ and $w$ are joined by an edge.

If $v$ and $w$ have three common neighbours then $K-v$ and $K-w$ are isomorphic so the result is trivial.  Therefore we can assume that $v$ and $w$ have at most two common neighbours.

The case when $v$ and $w$ have two common neighbours is special for two reasons.
First, when $v$ and $w$ have two common neighbours then there is a double triangle (two triangles sharing an edge) involving $v$, $w$, and their common neighbours.  The arguments of \cite{BrS} on double triangle invariance of $c_2$ have not been extended to the case where one of the vertices of the double triangle is the completion vertex and so these arguments cannot be used here.  However, the double triangle arguments should be readily generalizable to this situation and would work for all $p$, but the required techniques and setup are somewhat different and so this will not be pursued here.  Rather I will simply leave it as a comment that the case where $v$ and $w$ have exactly two common neighbours should be a consequence of the other cases because of the double triangle, and instead will prove this case directly with the present methods.

Second\footnote{Thanks to a referee for this observation.}, the case where $v$ and $w$ have two common neighbours is special enough that we can remove the requirement that $K$ have an odd number of vertices.  This argument is suggestive of how the parity restriction should hopefully be removable in general and is discussed in Section~\ref{sec conclusion}.

In the end we must consider $0$, $1$, or $2$ common neighbours for $v$ and $w$.  These three cases are illustrated in Figure~\ref{fig K}; label the vertices of $K$ as in the figure.

\begin{figure}
  \scalebox{0.8}{\raisebox{-2cm}{\includegraphics{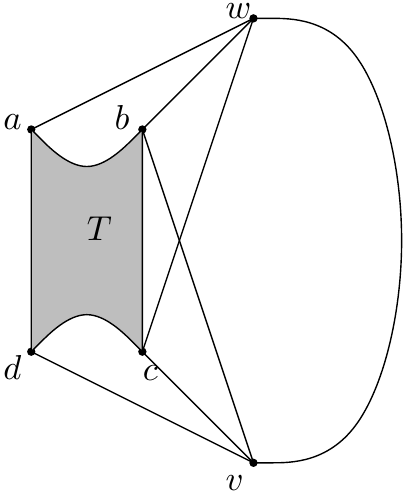}}} \text{ or }\scalebox{0.8}{\raisebox{-2cm}{\includegraphics{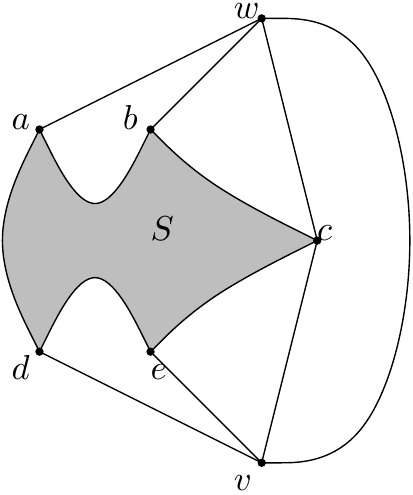}}} \text{ or } \scalebox{0.8}{\raisebox{-1.5cm}{\includegraphics{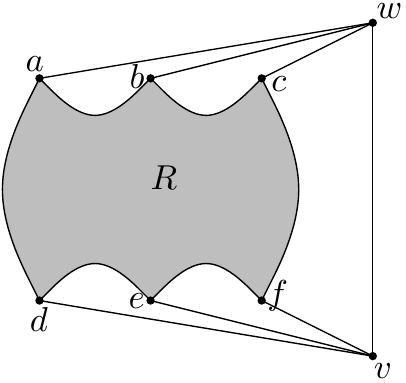}}}
  \caption{The graph $K$}\label{fig K}
\end{figure}

If $v$ and $w$ have no common neighbours, let $R= K-\{v,w\}$; that is $R$ is the grey blob on the right hand side of Figure~\ref{fig K}.
If $v$ and $w$ have one common neighbour, let $S= K-\{v,w\}$; that is $S$ is the grey blob in the middle of Figure~\ref{fig K}.
Finally, if $v$ and $w$ have two common neighbours, let $T=K-\{v,w\}$; that is $T$ is the grey blob on the left hand side of Figure~\ref{fig K}.

By the fact that $K$ is 4-regular and has an odd number of vertices, the number of edges of $K-\{v,w\}$ is always $-1 \mod 4$ and so for each of $R$, $S$, and $T$ we will use  $x_1, x_2, \ldots, x_{4k-1}$ for the edge variables.

The next step is to use the results of Section~\ref{sec background} to rewrite the $c_2^{(2)}$ in the $R$, $S$, and $T$ cases.

Call a set partition with two parts a \emph{bipartition} and use the following notation: 

\begin{definition}
  Suppose $P$ is a bipartition of a subset of the vertices of $R$.  Then let $\mathcal{R}_{P}$ be the set of bipartitions of the edges of $R$ such that one part is a spanning tree of $R$ and the other part is a spanning 2-forest where one tree of the 2-forest contains all vertices of the first part of $P$ and the other contains all vertices of the second part of $P$.  Furthermore, let $r_P = |R_P|$.

  Define $\mathcal{S}_P$ and $s_P$ similarly for a bipartition of a subset of the vertices of $S$ and define $\mathcal{T}_P$ and $t_P$ similarly for $T$.
\end{definition}

For example, if $R=K_{3,3}$ as shown in the first part of Figure~\ref{fig bipartition eg}, then one of the elements of $\mathcal{R}_{\{a,b\}, \{c\}}$ is marked by the thick and dotted lines in the second part of the figure.  
Permuting $a$ and $b$ and permuting $d$, $e$, and $f$ we get $12$ elements of  $\mathcal{R}_{\{a,b\}, \{c\}}$.  In this case one other form can occur namely where the isolated vertex $c$ is the tree containing $c$ in the spanning 2-forest.  There are 6 such elements and so $r_{\{a,b\}, \{c\}} = 18$ in this case.

\begin{figure}
  \includegraphics{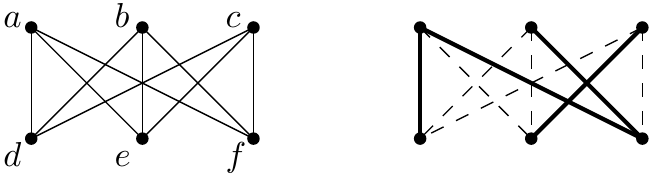}
  \caption{A graph and an edge bipartition of it compatible with $\{a,b\}, \{c\}$.}\label{fig bipartition eg}
\end{figure}
  
\begin{prop}
With notation as above, when $v$ and $w$ have no common neighbours
\begin{align}
  c_2^{(2)}(K-\{v\}) & = r_{\{a,b\}\{c\}} + r_{\{a,c\}, \{b\}} + r_{\{b,c\}, \{a\}} \mod 2, \label{eq c2r1} \\
  c_2^{(2)}(K-\{w\}) & = r_{\{d,e\}, \{f\}} + r_{\{d,f\}, \{e\}} + r_{\{e,f\}, \{d\}} \mod 2 \label{eq c2r2}
\end{align}
while when $v$ and $w$ have only neighbour $c$
\begin{align}
  c_2^{(2)}(K-\{v\}) & = s_{\{a,b\}, \{c\}} \mod 2, \label{eq c2s1} \\
  c_2^{(2)}(K-\{w\}) & = s_{\{d,e\}, \{c\}} \mod 2 \label{eq c2s2}
\end{align}
and when $v$ and $w$ have neighbours $b$ and $c$
\begin{align}
  c_2^{(2)}(K-\{v\}) & = t_{\{a\}, \{b,c\}} \mod 2, \label{eq c2t1} \\
  c_2^{(2)}(K-\{w\}) & = t_{\{d\}, \{b,c\}} \mod 2. \label{eq c2t2}
\end{align}
\end{prop}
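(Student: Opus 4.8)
The plan is to read off each of the six identities as a direct application of the $p=2$ machinery assembled in Section~\ref{sec background}, the only genuine choices being which $3$-valent vertex plays the role of $u$ and which of its incident edges plays the role of the distinguished edge $k$ of Example~\ref{eg psi to phi}. Since we have reduced to $v$ and $w$ being joined by an edge, $w$ has lost exactly the edge $vw$ in the decompletion $K-v$, so $w$ is $3$-valent there, and $(K-v)-w = K-\{v,w\}$ is $R$, $S$, or $T$ according to the number of common neighbours. Symmetrically $v$ is $3$-valent in $K-w$ and $(K-w)-v$ is the same blob. I would therefore invoke \eqref{eq key set up} with $u=w$ when computing $c_2^{(2)}(K-v)$ and with $u=v$ when computing $c_2^{(2)}(K-w)$; the three edges at $u$ run to the three neighbours of $u$ other than the deleted vertex (namely $a,b,c$ for $u=w$ and $d,e,f$ for $u=v$, matching the labelling of Figure~\ref{fig K}).

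With $u$ fixed, \eqref{eq key set up} together with the $p=2$ graphical interpretation derived just after it says that $c_2^{(2)}(K-v)$ is the parity of the number of ways to split the edges of the blob into a spanning tree and a compatible spanning $2$-forest whose vertex bipartition is $\{v_1\},\{v_2,v_3\}$, where $v_1$ is the endpoint of whichever edge at $u$ we single out. The dimension count is exactly what makes this clean: the blob has $V$ vertices and $N=2V-3$ edges (this is forced by $K$ being $4$-regular with $v,w$ adjacent), so a spanning tree has $V-1$ edges and a compatible $2$-forest has $V-2$ edges, and $(V-1)+(V-2)=N$ means the two complements account for each variable exactly once in the multilinear monomial $x_1\cdots x_N$. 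Hence the coefficient counts precisely partitions of the edge set into a spanning tree and a compatible $2$-forest, which is exactly the cardinality defining $r_{\{v_1\},\{v_2,v_3\}}$ (respectively $s$ and $t$). Thus for every admissible choice of $v_1$ one gets $c_2^{(2)}(K-v)\equiv r_{\{v_1\},\{v_2,v_3\}}\pmod 2$.

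The last ingredient is the freedom in choosing $v_1$, noted right after \eqref{eq key set up}: each of the three edges at $u$ yields a valid computation of the one fixed number $c_2^{(2)}(K-v)$ modulo $2$. In the $S$ case I would single out the edge from $u$ to the common neighbour $c$, giving $c_2^{(2)}(K-v)\equiv s_{\{a,b\},\{c\}}$ immediately, and likewise $s_{\{d,e\},\{c\}}$ for $K-w$, which are \eqref{eq c2s1} and \eqref{eq c2s2}. In the $T$ case I would instead single out the edge to the non-common neighbour ($a$ when $u=w$, $d$ when $u=v$), which groups the two common neighbours $b,c$ and produces $t_{\{a\},\{b,c\}}$ and $t_{\{d\},\{b,c\}}$, i.e.\ \eqref{eq c2t1} and \eqref{eq c2t2}. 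In the $R$ case there is no distinguished neighbour, so I would use all three choices at once: each of $r_{\{a,b\},\{c\}}$, $r_{\{a,c\},\{b\}}$, $r_{\{b,c\},\{a\}}$ is congruent to $c_2^{(2)}(K-v)$, and since $3\equiv 1\pmod 2$ their sum is congruent to it as well, giving \eqref{eq c2r1}; equation \eqref{eq c2r2} is identical with $d,e,f$.

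I do not expect a serious obstacle in this proposition: it is essentially a translation of \eqref{eq key set up} into the bipartition notation, and the real difficulty of the paper lies in the parity computations of Sections~\ref{sec involutions} and~\ref{sec compat cycles}. The only points needing care are bookkeeping ones — correctly matching the figure's labels to the correct decompletion, checking that the edge to the chosen $v_1$ survives in the decompletion, and observing the $3\equiv 1$ congruence that lets the symmetric three-term sum stand in for a single term in the $R$ case. The symmetric form is deliberately the convenient one for the involution and cycle-swapping arguments to follow, which is presumably why it is recorded here rather than a single term.
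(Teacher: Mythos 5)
Your proposal is correct and follows essentially the same route as the paper: it invokes \eqref{eq key set up} (i.e.\ Lemmas~\ref{lem D3} and \ref{lem cor of CW} together with Example~\ref{eg psi to phi}), translates the multilinear coefficient at $p=2$ into counting edge bipartitions into a spanning tree and a compatible $2$-forest, and then uses the freedom in choosing $v_1$ --- summing all three choices (with $3\equiv 1 \bmod 2$) in the $R$ case and making the single symmetry-adapted choice in the $S$ and $T$ cases. Your explicit dimension count $N=2V-3$ is just a slightly more spelled-out version of the edge-assignment interpretation the paper records at the end of Section~\ref{sec background}, so there is no substantive difference.
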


\begin{proof}
By Lemmas~\ref{lem D3} and \ref{lem cor of CW} and Example~\ref{eg psi to phi}, as encapsulated in \eqref{eq key set up}, when $v$ and $w$ have no common neighbours
\begin{align*}
  c_2^{(2)}(K-\{v\}) & = [x_1x_2\cdots x_{4k-1}]\Phi_R^{\{a,b\},\{c\}}\Psi_R \mod 2 \\
  & = [x_1x_2\cdots x_{4k-1}]\Phi_R^{\{a,c\},\{b\}}\Psi_R \mod 2 \\
  & = [x_1x_2\cdots x_{4k-1}]\Phi_R^{\{b,c\},\{a\}}\Psi_R \mod 2
\end{align*}
and so by the joys of working modulo 2
\begin{align*}
  c_2^{(2)}(K-\{v\}) = \ & [x_1x_2\cdots x_{4k-1}]\Phi_R^{\{a,b\},\{c\}}\Psi_R \\& + [x_1x_2\cdots x_{4k-1}]\Phi_R^{\{a,c\},\{b\}}\Psi_R \\ &+ [x_1x_2\cdots x_{4k-1}]\Phi_R^{\{b,c\},\{a\}}\Psi_R \mod 2.
\end{align*}
Similarly
\begin{align*}
c_2^{(2)}(K-\{w\}) = \ & [x_1x_2\cdots x_{4k-1}]\Phi_R^{\{d,e\},\{f\}}\Psi_R \\ & + [x_1x_2\cdots x_{4k-1}]\Phi_R^{\{d,f\},\{e\}}\Psi_R \\ &+ [x_1x_2\cdots x_{4k-1}]\Phi_R^{\{e,f\},\{d\}}\Psi_R \mod 2. 
\end{align*}

Thus, by the edge assignment interpretation discussed at the end of Section~\ref{sec background}, $c_2^{(2)}(K-\{v\})$ is equal modulo 2 to the number of ways to partition the edges of $R$ into two parts where one part is a spanning tree and the other part is a spanning 2-forest where one tree of the 2-forest includes two vertices among $\{a,b,c\}$ and the other tree of the 2-forest includes the remaining vertex of $\{a,b,c\}$.  Similarly $c_2^{(2)}(K-\{w\})$ is equal modulo 2 to the number of ways to partition the edges of $R$ into two parts where one part is a spanning tree and the other part is a spanning 2-forest where one tree of the 2-forest includes two vertices among $\{d,e,f\}$ and the other tree of the 2-forest includes the remaining vertex of $\{d,e,f\}$.  Restated using our notation, this is the statement of the proposition when $v$ and $w$ have no common neighbours.

When $v$ and $w$ have only common neighbour $c$ we can calculate similarly.  We could again sum over the three possible bipartitions of $\{a,b,c\}$ and similarly for $\{c,d,e\}$.  However, the common neighbour $c$ breaks the symmetry and thus it will be more convenient for the remainder of the argument simply to work with the partitions $\{a,b\}, \{c\}$ and $\{c\},\{d,e\}$ giving the simpler result of the statement of the proposition in the case that $v$ and $w$ have common neighbour $c$.

Likewise when $v$ and $w$ have common neighbours $b$ and $c$, because the symmetry is broken it is most convenient only to take the partitions $\{a\}, \{b,c\}$ and $\{d\}, \{b,c\}$ and otherwise argue as above to obtain the result.
\end{proof}

In view of the above proposition, to prove Theorem~\ref{thm main}, it suffices to show that the parity of the number of edge partitions which contribute to the right hand side of \eqref{eq c2r1} is the same as the parity of the number  of edges partitions which contribute to the right hand side of \eqref{eq c2r2} and similarly for \eqref{eq c2s1} and \eqref{eq c2s2} and for \eqref{eq c2t1} and \eqref{eq c2t2}.

\begin{prop}\label{prop A expansion}
  With notation as above, when $v$ and $w$ have no common neighbours
  \begin{align*}
    c_2^{(2)}(K-\{v\}) - c_2^{(2)}(K-\{w\}) =\, &r_{\{a,b,d,e,f\},\{c\}} + r_{\{a,c,d,e,f\},\{b\}} \\
    & + r_{\{a\}, \{b,c,d,e,f\}} + r_{\{a,b\}, \{c,d,e,f\}} \\
    & + r_{\{a,c\}, \{b,d,e,f\}} + r_{\{a,d,e,f\}, \{b,c\}} \\
    & + r_{\{a,b,c,d,e\}, \{f\}} + r_{\{a,b,c,d,f\}, \{e\}} \\
    & + r_{\{a,b,c,e,f\}, \{d\}} + r_{\{a,b,c,f\}, \{d,e\}} \\
    & + r_{\{a,b,c,e\}, \{d,f\}} + r_{\{a,b,c,d\}, \{e,f\}} \mod 2
  \end{align*}
  while when $v$ and $w$ have only neighbour $c$
  \begin{align*}
    c_2^{(2)}(K-\{v\}) - c_2^{(2)}(K-\{w\}) =\, & s_{\{a,b,d\},\{c,e\}} + s_{\{a,b,e\}, \{c,d\}} \\ &+ s_{\{a,b\},\{c,d,e\}} + s_{\{a,d,e\}, \{b,c\}} \\&+ s_{\{a,c\}, \{b,d,e\}} + s_{\{a,b,c\}, \{d,e\}} \mod 2
  \end{align*}
  and when $v$ and $w$ have neighbour $b$ and $c$
  \[
  c_2^{(2)}(K-\{v\}) - c_2^{(2)}(K-\{w\}) = t_{\{a\}, \{b,c,d\}} + t_{\{a,b,c\},\{d\}} \mod 2.
  \]
\end{prop}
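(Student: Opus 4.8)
The plan is to prove all three cases by a single device: a \emph{refinement identity} for the counts $r_P$, $s_P$, $t_P$, followed by cancellation modulo $2$. Fix one of the graphs $G\in\{R,S,T\}$ together with its set of named (boundary) vertices, and let $P=\{X_1,X_2\}$ be a bipartition of a subset $X$ of the named vertices. Write $Y$ for the named vertices not in $X$. In any spanning $2$-forest of $G$ the two trees partition all of $V(G)$, so each $y\in Y$ lies in exactly one of the two trees. Sorting the objects counted by $r_P$ according to which tree each $y$ lands in gives the exact identity
\[
r_P=\sum_{Y=Y_1\sqcup Y_2} r_{\{X_1\cup Y_1,\;X_2\cup Y_2\}},
\]
the sum running over the $2^{|Y|}$ ordered splittings of $Y$; every summand is again a genuine bipartition because $X_1$ and $X_2$ keep both parts nonempty. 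The same statement holds verbatim for $s_P$ and $t_P$.

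First I would dispose of the $T$ case, which is the cleanest. Refining $t_{\{a\},\{b,c\}}$ over the single leftover vertex $d$ gives $t_{\{a,d\},\{b,c\}}+t_{\{a\},\{b,c,d\}}$, and refining $t_{\{d\},\{b,c\}}$ over $a$ gives $t_{\{a,d\},\{b,c\}}+t_{\{a,b,c\},\{d\}}$. Adding these modulo $2$, and recalling that $c_2^{(2)}(K-v)-c_2^{(2)}(K-w)\equiv t_{\{a\},\{b,c\}}+t_{\{d\},\{b,c\}}$, cancels the common term $t_{\{a,d\},\{b,c\}}$ and leaves exactly $t_{\{a\},\{b,c,d\}}+t_{\{a,b,c\},\{d\}}$. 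The $S$ case runs identically: refine $s_{\{a,b\},\{c\}}$ over $\{d,e\}$ and $s_{\{d,e\},\{c\}}$ over $\{a,b\}$ into four terms each, observe that the single shared term $s_{\{a,b,d,e\},\{c\}}$ cancels modulo $2$, and read off the six surviving terms, which are precisely those listed.

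For the $R$ case I would avoid expanding all $6\times 2^3$ summands directly and instead use the following packaging. Since the three conditions ``$a,b$ together and $c$ apart'', ``$a,c$ together and $b$ apart'', ``$b,c$ together and $a$ apart'' are mutually exclusive and together describe exactly the full bipartitions of $\{a,b,c,d,e,f\}$ that split $\{a,b,c\}$, summing the three refined expansions shows that, as integers,
\[
r_{\{a,b\},\{c\}}+r_{\{a,c\},\{b\}}+r_{\{b,c\},\{a\}}=\!\!\sum_{\substack{\{A,B\}\text{ full bipartition}\\ \{a,b,c\}\text{ split}}}\!\! r_{\{A,B\}},
\]
and symmetrically for the $\{d,e,f\}$ terms. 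Hence $c_2^{(2)}(K-v)-c_2^{(2)}(K-w)$ is, modulo $2$, the sum of $r_{\{A,B\}}$ over those full bipartitions for which exactly one of $\{a,b,c\}$, $\{d,e,f\}$ is split, i.e. the symmetric difference of the two families. A short enumeration shows there are exactly twelve such bipartitions, six with $\{a,b,c\}$ monochromatic and $\{d,e,f\}$ split and six with the roles reversed, and matching them against the list identifies them with the twelve stated terms.

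The conceptual content is entirely in the refinement identity; the rest is bookkeeping. I expect the main obstacle to be the $R$ case, specifically making the cancellation transparent rather than a brute-force tally of the $48$ terms produced by naive refinement. The reformulation ``exactly one of the two triples is split'' is what keeps this honest: it simultaneously guarantees that no spurious term survives and that none of the twelve listed terms is missed.
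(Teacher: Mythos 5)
Your proposal is correct and takes essentially the same route as the paper: the ``refinement identity'' is exactly the paper's step of enumerating, for each $r_P$, $s_P$, or $t_P$, all placements of the remaining named vertices among the two trees of the 2-forest, and then cancelling the common terms modulo $2$. Your symmetric-difference packaging of the $R$ case (``exactly one of the two triples is split'') is merely a tidier organization of the same bookkeeping the paper performs by brute-force collection.
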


\begin{proof}
  Consider the right hand sides of \eqref{eq c2r1} and \eqref{eq c2r2}.
Enumerating all possibilities
\begin{align*}
  r_{\{a,b\}, \{c\}} =\ & r_{\{a,b,d,e,f\},\{c\}} + r_{\{a,b,d,e\}, \{c,f\}} + r_{\{a,b,d,f\}, \{c,e\}} + r_{\{a,b,e,f\}, \{c,d\}} \\
  & + r_{\{a,b,d\}, \{c,e,f\}}  + r_{\{a,b,e\}, \{c,d,f\}} + r_{\{a,b,f\}, \{c,d,e\}} + r_{\{a,b\}, \{c,d,e,f\}}
\end{align*}
and similarly for the other terms.  Collecting these calculations together, simplifying modulo 2, and performing analogous calculations with regards to the right hand sides of \eqref{eq c2s1}, \eqref{eq c2s2}, \eqref{eq c2t1}, and \eqref{eq c2t2} we get the proposition.
\end{proof}

It is best, in my view, to keep a graphical viewpoint with the above formulas.  For example, one can represent a given term by drawing the graph and marking the partition by using different vertex shapes.  Then the second equation of the statement of Proposition~\ref{prop A expansion} looks like
\begin{align*}
  & c_2^{(2)}(K-\{v\}) - c_2^{(2)}(K-\{w\}) \\ &= \raisebox{-.2cm}{\includegraphics{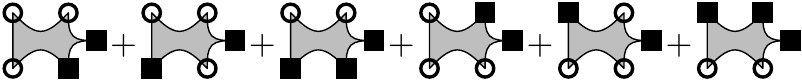}} \mod 2.
  \end{align*}
  The reader is encouraged to translate all the equations from here on out into this notation in order to better see the intuition behind the argument.
Note that this graphical notation is not the same as in \cite{BrY, CYgrid, Ycirc} where a graph with a marked partition in this way represented the spanning forest polynomial for that graph and partition.

One's first thought for proceeding from here would likely be to find a fixed-point-free involution of the union of the $\mathcal{R}$, $\mathcal{S}$, or $\mathcal{T}$ sets appearing (via their counts) in Proposition~\ref{prop A expansion}.  It is not clear how to do this for all the $\mathcal{R}$, $\mathcal{S}$ and $\mathcal{T}$ sets and so the method will be more complicated.    First we will deal with some of the $\mathcal{R}$ and $\mathcal{S}$ sets via involutions.  For the sets that remain we will build auxiliary graphs which use the properties of certain cycles along with the parity hypotheses on $K$ to show these remaining sets all have an even contribution.

\section{Some involutions from swapping around particular vertices}\label{sec involutions}

The involution is simplest in the case where $v$ and $w$ have only common neighbour $c$ and the partition breaks $a,b,c,d,e$ into $\{a,b,c\},\{d,e\}$ or $\{a,b\},\{c,d,e\}$, so we will begin by discussing that case.

\begin{lemma}\label{lem swap}
Let $\sigma$ be a bipartition of the edges of $S$ so that each part is a spanning forest (either or both of which may potentially be a spanning forest with one tree, that is a spanning tree) where in each forest every tree contains at least one of $a$, $b$, $d$, or $e$.  Then of the two edges incident to $c$ exactly one is in each part of $\sigma$ and swapping which part these two edges are in yields a new partition $\sigma'$ with the above listed properties, but not necessarily  partitioning the vertices among the trees in the same way.
\end{lemma}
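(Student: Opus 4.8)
The plan is to exploit the fact that in $S$ the vertex $c$ has degree exactly $2$. Indeed, $c$ is a common neighbour of both removed vertices $v$ and $w$, so since $K$ is a simple $4$-regular graph, $c$ loses exactly the two distinct edges $cv$ and $cw$ upon decompletion, leaving it with two edges in $S$; call them $e_1$ and $e_2$, with other endpoints $p_1$ and $p_2$. Everything in the lemma is then a consequence of manipulating these two leaf edges.

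First I would establish that $e_1$ and $e_2$ lie in different parts of $\sigma$. Suppose not, say both lie in part one. Then in part two the vertex $c$ is incident to no edge, so $c$ is isolated and forms a singleton tree $\{c\}$ of that part's spanning forest. Since $c \notin \{a,b,d,e\}$, this tree contains none of $a,b,d,e$, contradicting the hypothesis; the same argument rules out both edges lying in part two. Hence exactly one of $e_1,e_2$ lies in each part, which proves the first assertion, and in particular $c$ has degree $1$ in each part.

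For the swap itself, I would verify directly that exchanging $e_1$ and $e_2$ between the two parts preserves all the stated properties. Consider part one, and say it contains $e_1$. Because $c$ is a leaf there, removing $e_1$ isolates $c$ without otherwise changing the forest, raising the component count by one; then adding $e_2$ attaches the now-isolated $c$ to the component of $p_2$, and since $c$ is momentarily isolated this creates no cycle and lowers the component count back by one. Thus part one remains a spanning forest with the same number of trees, and symmetrically for part two, so a spanning tree stays a spanning tree and a $2$-forest stays a $2$-forest. The remaining point is that every tree still contains one of $a,b,d,e$: moving the single vertex $c$ between trees cannot strip a tree of its distinguished vertex, because $c$ is not among $a,b,d,e$, and the tree losing $c$ is nonempty since it still contains $p_1$. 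Hence $\sigma'$ has exactly the listed properties, though $c$ may now sit in a different tree, which is why the induced partition of the vertices need not be preserved.

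The one place requiring a little care — and the closest thing to an obstacle — is confirming that the swap never creates a cycle and never alters the number of trees in a part, especially in the degenerate case where $p_1$ and $p_2$ already lie in the same tree of a given part before the swap. This is handled uniformly by the observation that $c$ has degree $1$ in each part: deleting its edge genuinely isolates it, so the subsequent addition merely reconnects a singleton, and the argument goes through identically whether or not $p_1$ and $p_2$ were in a common tree.
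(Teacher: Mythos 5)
Your proposal is correct and takes essentially the same route as the paper's proof: observe that $c$ is 2-valent in $S$, deduce from the hypothesis (no tree can be the isolated vertex $c$) that exactly one edge incident to $c$ lies in each part, and then use the leaf-detach-and-reattach argument to show the swap preserves the spanning forest structure in each part. Your additional explicit checks (that the number of trees in each part is unchanged and that every tree retains a vertex from $\{a,b,d,e\}$) are just careful elaborations of what the paper leaves implicit.
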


\begin{proof}
  Note that $c$ is 2-valent in $S$.  Let $y$ and $z$ be the two neighbours of $c$. 

  By hypothesis, in each part of $\sigma$ the vertex $c$ is connected to some other vertices of $S$ since every tree contains at least one of $a$, $b$, $d$, or $e$ and so at least one edge incident to $c$ is assigned to this part.  Since $c$ is 2-valent this means that exactly one incident edge to $c$ is in each part of $\sigma$.  Consider the spanning forest $F$ corresponding to one part of $\sigma$.  In $F$, $c$ is a leaf and so removing the edge incident to $c$ isolates $c$ and does not otherwise change the connectivity of the trees in $F$.  Without loss of generality say that $y$ was the other end of this edge.  Vertex $z$ is in some tree of $F$ and adding the edge between $z$ and $c$ reconnects $c$ to one of the trees of $F$ while maintaining a forest structure.  The same holds for the other part of $\sigma$.
\end{proof}

\begin{lemma}\label{lem B vert swap}
There is a fixed-point free involution on $\mathcal{S}_{\{a,b,c\}, \{d,e\}} \cup \mathcal{S}_{\{a,b\}, \{c,d,e\}}$ and thus $s_{\{a,b,c\}, \{d,e\}} + s_{\{a,b\}, \{c,d,e\}} = 0 \mod 2$.
\end{lemma}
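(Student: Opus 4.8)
The plan is to show that the edge swap of Lemma~\ref{lem swap}, applied at the common‑neighbour‑of‑nobody vertex $c$, is the desired involution. First I would record a clean description of the union: both $\mathcal{S}_{\{a,b,c\}, \{d,e\}}$ and $\mathcal{S}_{\{a,b\}, \{c,d,e\}}$ consist of edge bipartitions of $S$ into a spanning tree and a spanning $2$-forest, and in every such bipartition the $2$-forest keeps $a,b$ in one tree and $d,e$ in the other; the two sets are distinguished solely by whether $c$ lies in the tree containing $\{a,b\}$ or the tree containing $\{d,e\}$. Thus the union is exactly the set of (spanning tree, spanning $2$-forest) edge bipartitions whose $2$-forest separates $\{a,b\}$ from $\{d,e\}$, and since $c$ lies in exactly one tree the two sets are disjoint, giving $|\mathcal{S}_{\{a,b,c\}, \{d,e\}} \cup \mathcal{S}_{\{a,b\}, \{c,d,e\}}| = s_{\{a,b,c\}, \{d,e\}} + s_{\{a,b\}, \{c,d,e\}}$.

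Next I would verify that Lemma~\ref{lem swap} applies to every $\sigma$ in this union: one part is a single spanning tree, which contains all of $a,b,d,e$, and the other is a $2$-forest each of whose trees contains one of $a,b,d,e$, so both parts are spanning forests satisfying the hypothesis. Hence exactly one of the two edges incident to the $2$-valent vertex $c$ lies in each part, and swapping these two edges yields a bipartition $\sigma'$ that is again a spanning tree together with a spanning $2$-forest (removing the $c$-edge from the tree isolates the leaf $c$, and adding the other $c$-edge reattaches it, and likewise inside the $2$-forest). I would then observe that the swap leaves every vertex other than $c$ in the tree it already occupied, so the $2$-forest of $\sigma'$ still separates $\{a,b\}$ from $\{d,e\}$; therefore $\sigma'$ lies in the union. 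The map is visibly an involution, and it is fixed-point free because the two edges at $c$ always occupy different parts, so swapping them always changes the bipartition.

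The main obstacle is the well-definedness step, namely confirming that $\sigma'$ lands back in the union rather than merely in the larger set of (tree, $2$-forest) bipartitions. The content there is that the swap can relocate $c$ from one tree of the $2$-forest to the other (precisely when its two neighbours lie in different trees) but can never disturb the placement of $a,b,d,e$; once this is pinned down, the involution either exchanges the sets $\mathcal{S}_{\{a,b,c\}, \{d,e\}}$ and $\mathcal{S}_{\{a,b\}, \{c,d,e\}}$ or fixes the set (when $c$'s two neighbours already share a tree), while always remaining inside the union. Fixed-point freeness of this involution then forces $s_{\{a,b,c\}, \{d,e\}} + s_{\{a,b\}, \{c,d,e\}} = 0 \mod 2$.
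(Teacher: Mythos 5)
Your proposal is correct and is essentially the paper's own proof: both apply the edge swap of Lemma~\ref{lem swap} at the $2$-valent vertex $c$, observe that the swap can move only $c$ between the trees of the $2$-forest (so the result stays in $\mathcal{S}_{\{a,b,c\}, \{d,e\}} \cup \mathcal{S}_{\{a,b\}, \{c,d,e\}}$), and conclude fixed-point freeness from the fact that the edge of $c$ lying in the spanning tree changes. Your explicit emphasis that all vertices other than $c$ retain their tree is exactly the well-definedness point the paper makes by cases, so there is nothing to add.
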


\begin{proof}
All edge bipartitions in $\mathcal{S}_{\{a,b,c\}, \{d,e\}} \cup \mathcal{S}_{\{a,b\}, \{c,d,e\}}$ satisfy the hypotheses of Lemma~\ref{lem swap}.  To each such an edge partition swap the parts to which the edges incident to $c$ belong.  Consider an edge partition coming from $\mathcal{S}_{\{a,b,c\}, \{d,e\}}$.  Removing the edges incident to $c$ disconnects $c$ in both the 2-forest and the tree.  Putting the edges back in, but in opposite parts of the bipartition, reconnects $c$ to one of the trees of the 2-forest as well as to the single tree.  If $c$ is reconnected to the tree of the 2-forest including $\{a,b\}$ then we now have another edge partition in $\mathcal{S}_{\{a,b,c\}, \{d,e\}}$.  If $c$ is now connected to the tree of the 2-forest involving $\{d,e\}$ then we now have an edge partition in $\mathcal{S}_{\{a,b\}, \{c,d,e\}}$.  The edge partition must be distinct from the initial partition since which edge incident to $c$ corresponds to the tree has changed.  This operation is clearly an involution.  Thus we get a fixed-point free involution on $\mathcal{S}_{\{a,b,c\}, \{d,e\}} \cup \mathcal{S}_{\{a,b\}, \{c,d,e\}}$.  Consequently, the size of this set is even.
\end{proof}

We can also apply the swapping map described in the previous proof to the other $\mathcal{S}$ sets.  However an edge partition in $\mathcal{S}_{\{a,b,d\}, \{c,e\}}$ can either be mapped to another partition from the same set or it can be mapped to $\mathcal{S}_{\{a,b,c,d\},\{e\}}$ which is not an $\mathcal{S}$ set appearing in Proposition~\ref{prop A expansion}.  An analogous situation occurs for the other $\mathcal{S}$ sets not dealt with in the previous lemma.  For these remaining $\mathcal{S}$ sets, instead use the methods of Section~\ref{sec compat cycles}.

\medskip

Next we look at some of the $\mathcal{R}$ sets which can be tackled with a generalized version of the above argument.

\begin{lemma}\label{lem controlV}
  Let $p_1\cup p_2$ be a partition of $\{a,b,c,d,e,f\}$ where $p_1$ consists of either all of $\{a,b,c\}$ and exactly one of $\{d,e,f\}$ or all of $\{d,e,f\}$ and exactly one of $\{a,b,c\}$.  Let $x$ be the element of $p_1$ which is alone from its trio.  Consider any edge partition $\tau$ in $\mathcal{R}_{p_1,p_2}$.  Let $t$ be the tree of the 2-forest of $\tau$ associated to $p_1$.

  There is a unique vertex $y$ with the following properties
  \begin{itemize}
  \item Either $y\in p_1$ and is 2-valent in $t$ or $y\not\in p_1$ and $y$ is 3-valent in $t$.
  \item Removing $y$ from $t$ gives a component containing exactly two vertices of $p_1$ and a component containing exactly one vertex of $p_1$ (hence the third component, if it exists also contains exactly one vertex of $p_1$).
  \item Either $x=y$ or $x$ is in one of the components after removing $y$ which contains exactly one vertex of $p_1$.
  \end{itemize}
\end{lemma}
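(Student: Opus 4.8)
The plan is to prove existence and uniqueness of $y$ by reducing everything to the tree $t$ together with the four marked vertices of $p_1$, and then analysing how $x$ sits relative to the other three. Write the full trio contained in $p_1$ as $\{a,b,c\}$ (the case where it is $\{d,e,f\}$ is identical by symmetry), so that $p_1=\{a,b,c,x\}$ with $x$ the lone vertex from the opposite trio. The crucial first step is to record a degree restriction on $t$. Since $\tau\in\mathcal{R}_{p_1,p_2}$ partitions the edges of $R$ into a spanning tree and a $2$-forest, every vertex of $R$ meets at least one edge of the spanning tree; hence the degree of any vertex in the $2$-forest, and a fortiori in $t$, is at most its degree in $R$ minus one. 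Because the six vertices $a,\dots,f$ are $3$-valent in $R$ and every other vertex is $4$-valent, this shows that in $t$ each of $a,b,c,x$ has degree at most $2$ and \emph{no} vertex has degree exceeding $3$. This bound is what rules out the degenerate four-pronged star configuration that would otherwise break the statement, and it is the structural heart of the argument.

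Next I would set up the relevant subtree. Let $t_T$ be the minimal subtree of $t$ spanning the trio $\{a,b,c\}$ and let $m_T$ be its median, the unique vertex lying on all three pairwise paths. When $m_T\notin\{a,b,c\}$ the degree bound forces $\deg_t(m_T)=3$, so the three legs from $m_T$ to $a$, $b$, $c$ already saturate $m_T$; when $m_T$ is itself a trio vertex, $t_T$ is a path with that vertex in its interior, again saturating it. Either way no further edge can leave $m_T$. I then define $y$ to be the vertex at which the path from $x$ first meets $t_T$, with $y=x$ when $x$ already lies on $t_T$. The saturation just noted gives $y\neq m_T$, so $y$ lies on a single leg, say the leg toward $a$, and the third trio vertex $a$ is the unique one placed on the same side of $y$ as $x$.

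It then remains to verify the three properties and uniqueness. For the chosen $y$, deleting it from $t$ separates $\{b,c\}$ (two vertices of $p_1$) from both $a$ and $x$, producing the component $\{a\}$ and, when $y\neq x$, the component $\{x\}$; the valence condition holds directly, since either $y=x$ is a marked $2$-valent vertex, or $y$ is an unmarked vertex carrying exactly the three edges toward $m_T$, toward $a$, and toward $x$, hence $3$-valent. For uniqueness I would argue that any admissible $y'$ must have its two-element component consist of two trio vertices, because the third property forbids $x$ from lying in that component; this pins down both the separated trio vertex and the side on which $x$ attaches. A short path analysis then shows that moving $y'$ toward $m_T$ creates a $2+2$ split violating the second property, moving it the other way creates a $3+1$ split, and taking $y'=m_T$ puts $x$ into the two-element component; so $y'=y$.

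I expect the main obstacle to be the bookkeeping around the degenerate cases, namely when the median $m_T$ coincides with a trio vertex, and when $x$ attaches exactly at a trio vertex rather than at an interior leg vertex, since these are precisely the places where the membership clause ($y\in p_1$ versus $y\notin p_1$) and the valence clause ($2$-valent versus $3$-valent) switch. Producing a single clean definition of $y$ that handles all of these uniformly, and confirming that the degree bound genuinely excludes the star, is where the care is needed; the rest is a routine path-separation computation.
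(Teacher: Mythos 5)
Your proposal is correct, and it rests on the same structural facts as the paper's own proof: the key observation that, since the spanning tree of $\tau$ uses at least one edge at every vertex, every vertex of $t$ is at most $3$-valent and every vertex of $p_1$ is at most $2$-valent in $t$, followed by an analysis of the Steiner subtree of $t$ on the marked vertices. Where you diverge is in execution. The paper takes the subtree of $t$ spanned by all four vertices of $p_1$, notes that the degree bound leaves only finitely many configurations for it (the double-$Y$ and its contractions, drawn in Figure~\ref{fig control vert}), and checks the three properties and uniqueness configuration by configuration. You instead work with the Steiner tree $t_T$ of the trio alone, show its median $m_T$ is saturated by the degree bound, and define $y$ uniformly as the vertex where the path from the outsider $x$ first meets $t_T$; uniqueness then follows from a separation argument (any admissible $y'$ must lie on the segment of the leg between $y$ and $m_T$, and every position other than $y$ violates one of the three properties) rather than from an enumeration. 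Your route buys an explicit, essentially case-free characterization of the control vertex and does not rely on the completeness of a pictured list of configurations; the paper's route buys brevity, since once the figure is accepted each configuration is verified by inspection. The price of your route is exactly the bookkeeping you flag at the end: your existence dichotomy as literally written (``either $y=x$ is a marked $2$-valent vertex, or $y$ is an unmarked vertex carrying three edges'') omits the case where the path from $x$ attaches at a trio vertex, in which case $y$ is that trio vertex, lies in $p_1$, and is $2$-valent; since you identify this degenerate case explicitly and it is handled exactly like your $y=x$ case, this is a presentational rather than a mathematical gap.
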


We will call the vertex $y$ defined in the above lemma the \emph{control vertex} of $\tau$ and we call the vertex $x$ the \emph{outsider vertex}.

\begin{proof}
  Since the spanning tree of $\tau$ spans, every vertex of $t$ is at most 3-valent and the vertices of $p_1$ are at most 2-valent in $t$.  The union of the paths in $t$ between the vertices of $p_1$ gives a subtree of $t$ where every leaf is in $p_1$.  There are only finitely many configurations for this subtree; these are illustrated in Figure~\ref{fig control vert} where the edges in the figure represent paths in $t$.  For each configuration the three properties and uniqueness can be checked directly, remembering that the degree of $y$ remains the same in $t$ as in the subtree.
\end{proof}

\begin{figure}
  \includegraphics{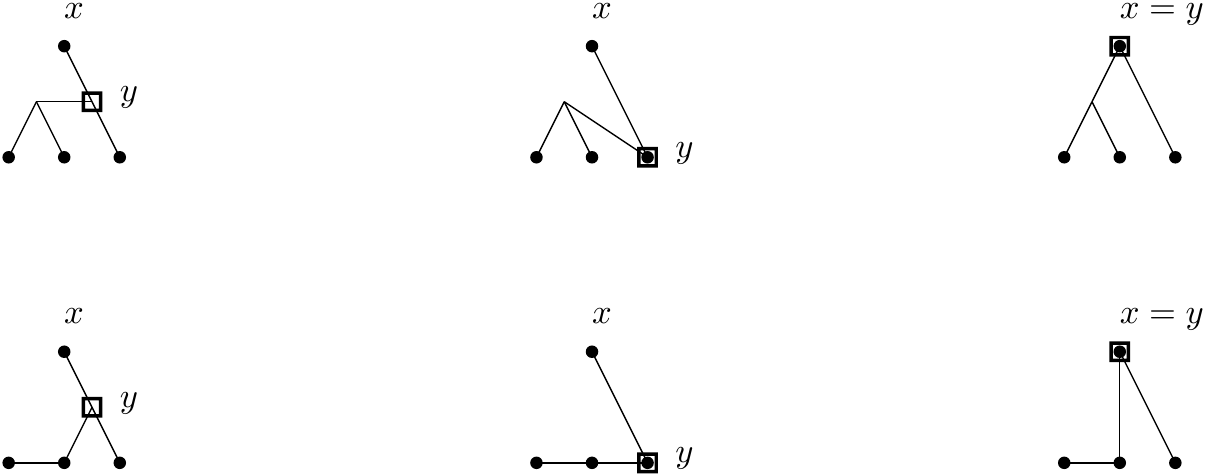}
  \caption{Schematic determining the control vertex $y$.  The lower three vertices in each schematic are $p_1-\{x\}$ in any order.}\label{fig control vert}
\end{figure}

Note that all the different configurations in Figure~\ref{fig control vert} can be viewed as special cases of the top left configuration where some of the paths have been contracted.

\begin{lemma}\label{lem A vert swap}
  There is a fixed-point free involution on
  \begin{gather*}
  \mathcal{R}_{\{a,b\}, \{c,d,e,f\}} \cup \mathcal{R}_{\{a,c\}, \{b,d,e,f\}} \cup \mathcal{R}_{\{a,d,e,f\}, \{b,c\}} \cup \mathcal{R}_{\{a,b,c,d\}, \{e,f\}} \\ \cup \mathcal{R}_{\{a,b,c,e\}, \{d,f\}} \cup \mathcal{R}_{\{a,b,c,f\}, \{d,e\}}
  \end{gather*}
  and thus
  \begin{align*}
  r_{\{a,b\}, \{c,d,e,f\}} + r_{\{a,c\}, \{b,d,e,f\}} + r_{\{a,d,e,f\}, \{b,c\}} & \\+ r_{\{a,b,c,d\}, \{e,f\}} + r_{\{a,b,c,e\}, \{d,f\}} + r_{\{a,b,c,f\}, \{d,e\}} & = 0 \mod 2.
  \end{align*}
\end{lemma}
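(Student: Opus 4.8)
The plan is to realize this as a direct generalization of the two-edge swap from Lemma~\ref{lem B vert swap}, with the control vertex $y$ of Lemma~\ref{lem controlV} playing the role that the $2$-valent vertex $c$ played there. The first thing I would record is the organizing observation that the six sets correspond exactly to the six choices of outsider vertex $x\in\{a,b,c,d,e,f\}$: in each of them one part $p_2$ of the vertex bipartition consists of two vertices from a common trio and the other part $p_1$ is the remaining trio together with the third member $x$ of $p_2$'s trio. This is precisely the shape of $p_1$ to which Lemma~\ref{lem controlV} applies, so to every edge partition $\tau$ in the union I can canonically attach a control vertex $y$ and an outsider $x$ purely from the local branching structure of the tree $t$ of the $2$-forest. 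Any swap rule phrased only in terms of this local picture at $y$ then has a genuine chance of being an involution.

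Next I would define the swap itself. The key feature exploited in Lemma~\ref{lem swap} was that $c$ is a leaf in each part, so the single incident edges could be exchanged between the spanning tree and the $2$-forest without creating a cycle or disconnecting anything. Here the configurations catalogued in Figure~\ref{fig control vert} are designed to give the same leaf-like behaviour at $y$: after deleting the appropriate edge of $t$ incident to $y$, the outsider's branch detaches cleanly, and there should be a unique compensating spanning-tree edge that can be moved in the opposite direction to restore both a spanning tree and a spanning $2$-forest. I would specify the swap as exchanging the membership (spanning tree versus $2$-forest) of this canonical pair of edges at $y$. The map is fixed-point free for the same reason as in Lemma~\ref{lem B vert swap}: the assignment of edges at $y$ to the tree versus the forest is altered, so $\tau'\neq\tau$. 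Crucially, I must arrange the surgery so that $p_2$ remains a pair of same-trio vertices, so that the image again lies in one of the six sets rather than drifting into one of the single-vertex cases $r_{\{a,b,d,e,f\},\{c\}},\dots$ handled later by the cycle-swapping method.

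The hard part will be the verification that this local rule is actually an involution on the whole union. Concretely I expect the main obstacle to be a case analysis over the configurations of Figure~\ref{fig control vert}, treating both $y\in p_1$ (where $y$ is $2$-valent in $t$) and $y\notin p_1$ (where $y$ is $3$-valent in $t$) together with their degenerate subcases, and checking in each that applying Lemma~\ref{lem controlV} to the output $\tau'$ recovers the same control vertex $y$ and singles out the same pair of edges, so that a second application of the rule returns $\tau$. One must also confirm in each configuration that the swapped edge sets genuinely remain a spanning tree and a compatible $2$-forest of the required two-sibling shape, and that the outsider is transformed in a controlled way; it is exactly the uniqueness clause of Lemma~\ref{lem controlV} that I would lean on to make the recovery of $y$ forced, and hence the map well-defined and self-inverse. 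Whether the swap keeps $\tau'$ in the same set or moves it to another of the six is immaterial for the conclusion, since either way the union carries a fixed-point free involution and therefore has even size.
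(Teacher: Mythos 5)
Your overall strategy does match the paper's: use the control vertex $y$ of Lemma~\ref{lem controlV} as the generalization of the $2$-valent vertex $c$ from Lemma~\ref{lem swap}, swap a pair of edges incident to $y$ between the spanning tree and the $2$-forest, get fixed-point-freeness because which edge at $y$ lies in the spanning tree changes, and get the involution property because $\tau$ and $\tau'$ share the same control vertex. But the one step you leave unspecified --- \emph{which} edge of the $2$-forest tree incident to $y$ gets exchanged with the unique spanning-tree edge $\epsilon$ at $y$ --- is exactly where the content of the proof lies, and the rule you gesture at (``the outsider's branch detaches cleanly'') is the wrong one. Write $z$ for the other end of $\epsilon$, and $t_1$, $t_2$ for the trees of the $2$-forest with $y\in t_1$. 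If you delete the edge from $y$ to the outsider's component of $t_1-\{y\}$ and move $\epsilon$ into the $2$-forest, then: when $z\in t_2$, the detached component contains only the single special vertex $x$, so the image lands in a set of the form $\mathcal{R}_{\ldots,\{x\}}$ --- precisely the single-vertex sets you yourself say must be avoided (they are handled by the compatible-cycle argument of Section~\ref{sec compat cycles}, not by this involution); and when $z\in t_1$ but $z$ is not in the outsider's component, adding $\epsilon$ back creates a cycle inside what remains of $t_1$, so you do not even obtain a forest. The rule therefore fails in both generic situations, not merely in a degenerate subcase that careful bookkeeping over Figure~\ref{fig control vert} would repair.

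The missing idea is that the choice of the swapped edge $\eta$ must depend on where $z$ lies, and in the crossing case it must point away from the outsider, not toward it. The paper's rule is: if $z\in t_2$, take $\eta$ to be the edge from $y$ into the component of $t_1-\{y\}$ containing the \emph{two} same-trio vertices of $p_1$; after the swap that pair becomes the new two-element part of the vertex bipartition, while $y$, the outsider, and the rest of $t_1$ merge with $t_2$, so the image stays among the six sets. If instead $z\in t_1$, take $\eta$ to be the edge from $y$ into the component of $t_1-\{y\}$ containing $z$; then adding $\epsilon$ reconnects exactly the pieces that removing $\eta$ separated, and the vertex partition is unchanged. With this definition one checks that $\tau'$ has the same control vertex $y$, that the two cases are exchanged correctly by a second application, and hence that the map is a fixed-point free involution. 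Your proposal defers all of this to ``a case analysis over the configurations of Figure~\ref{fig control vert},'' but without the case split on the location of $z$ there is no well-defined map to analyze; this is a genuine gap rather than a routine verification.
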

  
\begin{proof}
  Let $\tau$ be an edge bipartition in the union of $\mathcal{R}$ sets in the statement of the lemma.  We need to set up some notation to build the involution
  \begin{itemize}
  \item All of these $\mathcal{R}$ sets satisfy the hypotheses of Lemma~\ref{lem controlV} and so let $y$ be the control vertex of $\tau$.
  \item There is exactly one edge incident to $y$ in the spanning tree of $\tau$.  Let $\epsilon$ be that edge.
  \item Let $z$ be the end of $\epsilon$ which is not $y$.
  \item Let $t_1$ be the tree of the 2-forest of $\tau$ which contains $y$ and let $t_2$ be the other tree of the 2-forest.
  \end{itemize}
  Now build $\tau'$ from $\tau$ as follows.
  \begin{itemize}
  \item If $z \in t_2$ then let $\eta$ be the edge incident to $y$ which leads to the component of $t_1-\{y\}$ with two vertices from $\{a,b,c,d,e,f\}$.  Swap which part of the bipartition $\tau$ contains $\epsilon$ and which contains $\eta$ to obtain $\tau'$.
  \item If $z \in t_1$ then let $\eta$ be the edge incident to $y$ which leads to the component of $t_1-\{y\}$ which contains $z$.  Swap which part of the bipartition $\tau$ contains $\epsilon$ and which contains $\eta$ to obtain $\tau'$.
  \end{itemize}

  First lets check that $\tau'$ is in the union of $\mathcal{R}$ sets in the statement.   Similarly to the proof of Lemma~\ref{lem swap}, $y$ is a leaf in the spanning tree of $\tau$ so removing $\epsilon$ disconnects $y$ from the spanning tree and adding $\eta$ reconnects $y$ maintaining a spanning tree structure.  Removing $\eta$ further disconnects the 2-forest of $\tau$ into three components.  If we constructed $\tau'$ by the second case, then adding $\epsilon$ to the 2-forest reconnects the same components that were disconnected by the removal of $\eta$.  If we constructed $\tau'$ by the first case, then removing $\eta$ cuts off the component of $t_1-\{y\}$ containing exactly two vertices from $\{a,b,c,d,e,f\}$ from the rest of $t_1$; adding $\epsilon$ reconnected $t_2$ instead.  Furthermore, the outsider vertex is in the part of $t_1$ that gets connected with $t_2$.  The result is an edge partition which corresponds to a partition of $\{a,b,c,d,e,f\}$ satisfying the hypotheses of Lemma~\ref{lem controlV} and so is in one of the $\mathcal{R}$ sets in the statement.

  Next note that the map $\tau \mapsto \tau'$ is fixed-point free since which edge of the control vertex is in the spanning tree changes.

  Finally the control vertex of $\tau$ and $\tau'$ are the same and so applying the map twice is the identity.  Thus we get a fixed-point free involution on the union of $\mathcal{R}$ sets in the statement and so the size of this set is even.
\end{proof}

Note that in the case that $v$ and $w$ had common neighbour $c$, then $c$ always plays the role of the control vertex but compared to the configurations in Figure~\ref{fig control vert} more of the paths have been contracted away.  In this way the case with no common neighbours generalized the simpler argument in the common neighbour case.

\section{Compatible cycles}\label{sec compat cycles}

This section defines certain special cycles and investigates their properties.  These cycles will let us determine the parity of the remaining $r_P$ and $s_P$ in Proposition~\ref{prop A expansion}  as well as the $t_P$.

\begin{definition}
  \mbox{}
  
  \begin{enumerate}
  \item Call a bipartition of the edges of any graph (for our purposes either $R$, $S$ or $T$) such that one part gives a spanning tree and the other part gives a spanning 2-forest a \emph{valid} edge partition.
  \item
    Suppose we have a valid edge partition.  This gives a bipartition of all of the vertices of the graph according to which tree of the 2-forest they are in.  Call a cycle $C$ \emph{compatible} with the edge partition if all vertices of $C$ are in the same part of the vertex partition and exactly one edge of $C$ is in the part of the edge partition corresponding to the spanning tree.
  \end{enumerate}
\end{definition}

\begin{lemma}\label{lem count compatible}
Suppose we have a valid edge partition.  The number of compatible cycles is the same as the number of edges of the graph which are in the part of the edge partition corresponding to the spanning tree but where both ends of the edge are in the same tree of the 2-forest.
\end{lemma}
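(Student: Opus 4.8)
The plan is to prove the lemma by exhibiting an explicit bijection between the two sets being counted. Fix the valid edge partition and write its two parts as the spanning tree and the spanning $2$-forest; let $F_1$ and $F_2$ denote the two trees of the $2$-forest, with vertex sets $V_1$ and $V_2$, so that the induced vertex bipartition is $V_1 \sqcup V_2$. The objects on one side are the edges $e$ in the spanning tree whose two endpoints lie in the same $V_i$; the objects on the other side are the compatible cycles. I will match each such edge to a cycle and check the correspondence is invertible.

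First I would define the forward map. Given an edge $e = uv$ in the spanning tree with both $u,v \in V_i$, note that $F_i$ is a tree containing both $u$ and $v$, so there is a unique path $P$ from $u$ to $v$ inside $F_i$. Since every edge of $P$ lies in the $2$-forest and $e$ lies in the spanning tree, $e \notin P$, and hence $C = P \cup \{e\}$ is a cycle. This cycle is compatible: all its vertices lie in $V_i$ (the path stays inside $F_i$ and the ends of $e$ are in $V_i$), and exactly one of its edges, namely $e$, lies in the spanning tree. For the inverse map, start from a compatible cycle $C$. By definition exactly one edge $e$ of $C$ lies in the spanning tree, and removing $e$ leaves a path whose edges all lie in the $2$-forest. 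Because all vertices of $C$ lie in a single part $V_i$, each of these $2$-forest edges joins two vertices of $V_i$ and therefore belongs to $F_i$ rather than $F_{3-i}$; thus $C - e$ is a path in $F_i$ joining the two endpoints of $e$, which shows that $e$ is a spanning-tree edge with both ends in the same tree $F_i$. Sending $C$ to this edge $e$ is the inverse map.

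These two maps are mutually inverse: the forward map recovers the unique $F_i$-path from the edge, and the uniqueness of paths in the tree $F_i$ guarantees that the path obtained when going backwards from $C$ coincides with it, so composing the maps in either order is the identity. This yields the claimed bijection and hence the equality of the two counts. I do not expect a serious obstacle here; the only point requiring care is the observation that the non-spanning-tree edges of a compatible cycle necessarily lie wholly within one tree $F_i$ (so that they form a path in a single tree) and that this path is unique, both of which follow immediately from $F_i$ being a tree together with the compatibility condition that all vertices of the cycle lie in the same part of the vertex bipartition.
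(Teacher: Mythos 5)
Your proof is correct and is essentially the paper's own argument: both rest on the fact that adding an edge between two vertices of a tree creates a unique cycle, applied to the trees of the 2-forest. You have simply spelled out the resulting bijection and its inverse in more detail than the paper does.
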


\begin{proof}
  Adding an edge joining two vertices of a tree gives a graph with a unique cycle.  When this fact is applied to one of the trees of the 2-forest then this cycle is compatible and every compatible cycle has this form.
\end{proof}

\begin{lemma}\label{lem odd and odd}
  Suppose we have a valid edge partition and let $V_1, V_2$ be the associated vertex partition.  Suppose that $\sum_{v\in V_i} \deg v$ is odd for $i=1,2$ and the total number of vertices of the graph is odd.  Then the number of compatible cycles is odd. 
\end{lemma}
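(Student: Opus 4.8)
The plan is to translate the count of compatible cycles, via Lemma~\ref{lem count compatible}, into a parity statement about how the spanning-tree edges sit relative to the vertex partition $V_1, V_2$, and then to read off that parity from the degree hypotheses. Write $n = |V(G)|$ and recall that the spanning tree of the valid edge partition has $n-1$ edges. By Lemma~\ref{lem count compatible}, the number of compatible cycles is exactly the number of spanning-tree edges both of whose endpoints lie in the same part of $V_1, V_2$; equivalently, it equals $(n-1) - c$, where $c$ denotes the number of spanning-tree edges with one endpoint in $V_1$ and one in $V_2$. So the whole statement reduces to determining the parities of $n-1$ and of $c$.

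The parity of $n-1$ is immediate: $n$ is odd by hypothesis, so $n-1$ is even.

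For the parity of $c$, the key observation is that no edge of the 2-forest crosses between $V_1$ and $V_2$, since by construction each tree of the 2-forest lies entirely within one part of the vertex partition. Hence every crossing edge of $G$ is a spanning-tree edge, and $c$ is in fact the total number of crossing edges. Now I would compute $\sum_{v \in V_1}\deg v$ by counting incidences: an edge with both ends in $V_1$ contributes $2$ and a crossing edge contributes $1$, so this sum equals twice the number of edges with both ends in $V_1$, plus $c$. Since the sum is odd by hypothesis, $c$ must be odd. (By the handshake lemma the two degree-sum hypotheses are equivalent, as they add to $2|E(G)|$, so strictly only one of them is needed, together with the parity of $n$.)

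Combining the two parities, the number of compatible cycles is $(n-1) - c$, which is even minus odd, hence odd, as claimed. I do not expect a real obstacle here; the argument is a clean incidence count. The only points requiring care are checking that the classification of edges is exhaustive and that 2-forest edges never cross the partition, so that the single quantity $c$ simultaneously governs the crossing term in the degree sum and the non-crossing count among the spanning-tree edges.
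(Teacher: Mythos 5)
Your proposal is correct and follows essentially the same argument as the paper: both proofs use Lemma~\ref{lem count compatible} to reduce the claim to counting non-crossing spanning-tree edges, show via a degree-sum incidence count that the number of crossing edges (all of which lie in the spanning tree) is odd, and then subtract from the $n-1$ spanning-tree edges, with $n-1$ even. The only difference is bookkeeping: the paper tracks the non-crossing edge counts $e_1, e_2$ and the 2-forest edge counts $|V_i|-1$ explicitly, while you absorb all non-crossing contributions into a single even term, which is a slightly streamlined presentation of the identical idea.
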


\begin{proof}
  Let $\ell$ be the number of edges crossing the vertex partition and Let $e_i$ be the number of edges not in the 2-forest but with both ends in the tree of $V_i$ for $i=1,2$.  The number of edges leaving the tree of the 2-forest associated to $V_i$ is
  \[
  \sum_{v\in V_i}\deg v - 2(|V_i|-1) - 2e_i = \ell
  \]
  for $i=1,2$.
  Therefore, by the degree hypothesis, $\ell$ is odd.
  The number of edges of the spanning tree of the valid edge partition is
  \[
  |V_1|+|V_2|-1 = e_1+e_2+\ell.
  \]
  $|V_1|+|V_2|$ is the total number of vertices of the graph and so by hypothesis is odd.  Therefore $e_1+e_2$ is also odd.  By Lemma~\ref{lem count compatible}  the number of compatible cycles is $e_1+e_2$ and hence is odd, as desired.
\end{proof} 

Now we want to apply this lemma to the $\mathcal{R}$ and $\mathcal{S}$ of Proposition~\ref{prop A expansion} sets which were not dealt with in the previous section as well as the $\mathcal{T}$ sets of Proposition~\ref{prop A expansion}.

\begin{lemma}\label{lem the odds we need}
  Every edge partition of
  \begin{gather*}\mathcal{R}_{\{a,b,d,e,f\}, \{c\}}\cup \mathcal{R}_{\{a,c,d,e,f\}, \{b\}} \cup \mathcal{R}_{\{a\}, \{b,c,d,e,f\}} \cup \mathcal{R}_{\{a,b,c,d,e\}, \{f\}}\\ \cup \mathcal{R}_{\{a,b,c,d,f\}, \{e\}} \cup \mathcal{R}_{\{a,b,c,e,f\}, \{d\}} \\ \cup \mathcal{S}_{\{a,b,d\}, \{c,e\}} \cup \mathcal{S}_{\{a,b,e\}, \{c,d\}} \cup \mathcal{S}_{\{a,c\}, \{b,d,e\}} \cup \mathcal{S}_{\{b,c\}, \{a,d,e\}}\\\cup \mathcal{T}_{\{a\}, \{b,c,d\}} \cup \mathcal{T}_{\{a,b,c\}, \{d\}}
  \end{gather*}
  has an odd number of compatible cycles.
\end{lemma}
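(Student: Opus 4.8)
The plan is to apply Lemma~\ref{lem odd and odd} to each of the edge partitions listed, so that the whole argument reduces to checking its two hypotheses: that the ambient graph has an odd number of vertices, and that each part of the vertex partition $V_1, V_2$ induced by the 2-forest has an odd degree sum. The first hypothesis is immediate and uniform: each of $R$, $S$, and $T$ equals $K-\{v,w\}$, so each has $|V(K)|-2$ vertices, which is odd because $K$ has an odd number of vertices by assumption.

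For the second hypothesis, first I would record which vertices have odd degree in each of $R$, $S$, $T$, since $\sum_{u\in V_i}\deg u$ is odd exactly when $V_i$ contains an odd number of odd-degree vertices. Every vertex of $K$ has degree $4$, and deleting $v$ and $w$ reduces a vertex's degree by the number of edges it had to $v$ or $w$. In the no-common-neighbour case the six vertices $a,b,c,d,e,f$ each lose exactly one such edge and so have degree $3$ in $R$, while every other vertex keeps degree $4$; thus the odd-degree vertices of $R$ are precisely $a,b,c,d,e,f$. In the one-common-neighbour case $c$ loses two edges and becomes $2$-valent (even), whereas $a,b,d,e$ each lose one edge and are $3$-valent, so the odd-degree vertices of $S$ are exactly $a,b,d,e$. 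In the two-common-neighbour case $b$ and $c$ each lose two edges (even) while $a$ and $d$ each lose one, so the odd-degree vertices of $T$ are exactly $a$ and $d$.

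The crucial point is that in every case the odd-degree vertices lie among the marked vertices, whose distribution between $V_1$ and $V_2$ is fixed by the subscript $P$; the remaining unmarked vertices may fall on either side but are all even-valent and so do not affect the parity of either degree sum. It then only remains to read off the split of the odd-degree vertices for each set in the statement. For each $\mathcal{R}$ set the six odd-degree vertices $a,b,c,d,e,f$ are split as $(5,1)$; for each $\mathcal{S}$ set the four odd-degree vertices $a,b,d,e$ are split as $(3,1)$ (the even-valent $c$ being irrelevant); and for each $\mathcal{T}$ set the two odd-degree vertices $a,d$ are split as $(1,1)$ (the even-valent $b,c$ being irrelevant). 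In every instance both parts receive an odd number of odd-degree vertices, so both degree sums are odd and Lemma~\ref{lem odd and odd} applies, yielding an odd number of compatible cycles.

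There is no deep obstacle here: once the degree parities are computed, the lemma is essentially a bookkeeping consequence of Lemma~\ref{lem odd and odd}. The only step that requires care is confirming that the odd-degree vertices are exactly the decompletion neighbours listed, and that no other vertex acquires odd degree; this hinges on the earlier reduction guaranteeing that $v$ and $w$ have precisely the neighbours shown in Figure~\ref{fig K}, and on correctly tracking that a common neighbour loses two edges (staying even) while a private neighbour loses one (becoming odd).
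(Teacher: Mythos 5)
Your proposal is correct and follows essentially the same route as the paper: both reduce the claim to the hypotheses of Lemma~\ref{lem odd and odd}, observe that all unmarked vertices are $4$-valent and hence irrelevant to the parity of the degree sums, and then verify that the marked odd-degree vertices split oddly between the two parts for every $\mathcal{R}$, $\mathcal{S}$, and $\mathcal{T}$ set listed. Your phrasing in terms of counting odd-degree vertices in each part is just a repackaging of the paper's explicit degree sums ($5\cdot 3$ and $3$; $3\cdot 3$ and $3+2$; $3$ and $3+2+2$), so there is nothing further to add.
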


\begin{proof}
  It suffices to check the hypotheses of Lemma~\ref{lem odd and odd}.
  
  The number of vertices of each of $R$, $S$, and $T$ is odd by our running assumptions on $K$.
  
  Take any edge partition in the union above.  All the vertices other than $\{a,b,c,d,e,f\}$ are degree 4, so it suffices to check that the sums of the degrees of the vertices in each part of the defining partition of $\{a,b,c,d\}$, or $\{a,b,c,d,e\}$, or $\{a,b,c,d,e,f\}$ are odd.  For the $\mathcal{R}$ sets one part has degree sum $5\cdot 3$ and the other has degree sum $3$,  for the $\mathcal{S}$ sets one part has degree sum $3\cdot 3$ and the other has degree sum $3+2$, and for the $\mathcal{T}$ sets one part has degree sum $3$ and the other has degree sum $3+2+2$.  All of these are odd.
\end{proof}

\begin{lemma}\label{lem cycle swap}
  Suppose we have a valid edge partition and a compatible cycle $C$.
  Let $f$ be the one edge of $C$ not in the 2-forest.  If we were to remove $f$ the spanning tree would split into exactly two trees, call them $t_1$ and $t_2$.  There are a non-zero even number of edges of $C$ with one end in $t_1$ and the other end in $t_2$ (including $f$ as one of the possibilities).  If we take any such edge other than $f$ and swap it with $f$ in the edge partition
  then we obtain a valid edge partition corresponding to the same vertex partition.  
\end{lemma}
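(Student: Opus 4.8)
The plan is to separate the two assertions of the lemma: first the parity count of edges of $C$ crossing between $t_1$ and $t_2$, and then the validity of the swap. Both follow from elementary tree surgery once the right path is identified, so the work is mostly organizational.

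For the count, I would first record that removing the single spanning-tree edge $f$ of $C$ splits the spanning tree into exactly two subtrees $t_1$ and $t_2$, and that $\{V(t_1), V(t_2)\}$ partitions all the vertices of the graph, with the two ends of $f$ lying one in each piece by construction. Then traverse the cycle $C$ and label each of its vertices by which of $t_1, t_2$ it belongs to. Going once around a cycle the label changes an even number of times, so the number of edges of $C$ whose ends lie in different $t_i$ is even; since $f$ is one of them, this number is nonzero and even. This is a short parity argument and presents no real difficulty.

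For the swap, fix a crossing edge $g \neq f$ of $C$ and exchange the parts of the edge partition containing $f$ and $g$. On the spanning-tree side, removing $f$ leaves $t_1 \cup t_2$ and adding $g$ reconnects them into a single tree precisely because $g$ has one end in $t_1$ and one in $t_2$; the number of edges is unchanged, so the result is again a spanning tree. On the 2-forest side I would invoke the compatibility of $C$: all vertices of $C$ lie in one tree, say $F_1$, of the 2-forest, and every edge of $C$ other than $f$ lies in the 2-forest, so $P := C - f$ is a path inside $F_1$. Since $F_1$ is a tree, $P$ is the unique $F_1$-path between the two ends $u, w$ of $f$. Hence deleting $g$ (an edge of $P$) separates $u$ from $w$ within $F_1$, and re-inserting $f$, which joins $u$ and $w$, rejoins exactly those two pieces; the vertex set of $F_1$ is restored and the other tree is untouched, so the new 2-forest has the same two trees on the same vertex sets. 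This shows the swapped partition is valid and corresponds to the same vertex partition.

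The main obstacle, such as it is, lies in checking that a single edge $g$ can simultaneously play both roles: it must cross between $t_1$ and $t_2$ so that the spanning-tree reconnection succeeds, and it must lie on the $F_1$-path $P$ so that the 2-forest re-splitting succeeds. The observation that resolves this is that every edge of $C$ other than $f$ automatically lies on $P$, so the only genuine extra requirement is the crossing condition, which is exactly what the first part counts. I would therefore present the parity count first and then remark that any of those crossing edges other than $f$ may serve as $g$, after which the two surgery verifications are independent and routine.
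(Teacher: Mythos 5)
Your proposal is correct and follows essentially the same route as the paper: the parity of crossing edges comes from traversing $C$ and tracking the $t_1$/$t_2$ labels, and the swap is verified by standard tree surgery on both the spanning tree and the 2-forest. The only cosmetic difference is that you delete $g$ from the forest tree before re-inserting $f$, whereas the paper adds $f$ (creating the unique cycle $C$) and then deletes $f'$; these are the same argument in a different order.
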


\begin{proof}
By construction $f$ has one end in $t_1$ and one end in $t_2$.  The vertices of $C$ can be bipartitioned based on whether they are in $t_1$ or in $t_2$.  Running around $C$ we must change which part of the bipartition we are in an even number of times in order to return to where we started, giving a non-zero even number of edges of the type described in the statement.

Let $f'\neq f$ be another edge of $C$ where we change from $t_1$ to $t_2$.  Removing $f$ from the spanning tree disconnects it into $t_1$ and $t_2$ while adding $f'$ reconnects $t_1$ and $t_2$ to obtain a spanning tree again.  Adding $f$ to the 2-forest creates one cycle, specifically $C$.  Removing any edge of $C$, in particular $f'$, returns us to a 2-forest with the same vertex partition. 
\end{proof}

\begin{lemma}\label{lem do cycle swap}
  \mbox{}

  \begin{enumerate}
  \item $r_{\{a,b,d,e,f\}, \{c\}} + r_{\{a,c,d,e,f\}, \{b\}} + r_{\{a\}, \{b,c,d,e,f\}} + r_{\{a,b,c,d,e\}, \{f\}} + r_{\{a,b,c,d,f\}, \{e\}} + r_{\{a,b,c,e,f\}, \{d\}} = 0 \mod 2$.
  \item $s_{\{a,b,d\}, \{c,e\}} + s_{\{a,b,e\}, \{c,d\}} + s_{\{a,c\}, \{b,d,e\}} + s_{\{b,c\}, \{a,d,e\}}  =  0 \mod 2$.
  \item $t_{\{a\}, \{b,c,d\}} + t_{\{a,b,c\}, \{d\}} = 0 \mod 2$.
  \end{enumerate}
\end{lemma}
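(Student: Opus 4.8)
The plan is to prove all three statements uniformly by a single counting-with-involution argument, handling each family as one disjoint union. For part (1) let $U$ be the union of the six $\mathcal{R}$ sets appearing in Lemma~\ref{lem the odds we need}; these are genuinely disjoint because an edge partition determines its vertex partition, so the left-hand side of (1) equals $|U|$ modulo $2$. Parts (2) and (3) are identical with the four $\mathcal{S}$ sets and the two $\mathcal{T}$ sets respectively. The whole purpose of the two preceding lemmas is to let us replace the opaque quantity $|U|$ by an auxiliary set that visibly carries a fixed-point-free involution, so it suffices to show $|U|$ is even.

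First I would form the set $X$ of pairs $(\tau, C)$ with $\tau \in U$ and $C$ a cycle compatible with $\tau$. By Lemma~\ref{lem the odds we need} every $\tau$ in $U$ has an odd number of compatible cycles, so summing over $\tau$ gives $|X| \equiv |U| \pmod{2}$. Next I would enrich this to the set $Y$ of triples $(\tau, C, f')$ where, writing $f$ for the unique edge of $C$ not in the 2-forest and $t_1, t_2$ for the two trees obtained by removing $f$ from the spanning tree, $f'$ ranges over the edges of $C$ crossing between $t_1$ and $t_2$ other than $f$. By Lemma~\ref{lem cycle swap} the number of such crossing edges including $f$ is a non-zero even number, so each $(\tau, C)$ admits an odd number of choices of $f'$; hence $|Y| \equiv |X| \equiv |U| \pmod{2}$.

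The heart of the argument is then a fixed-point-free involution on $Y$ built from the swap of Lemma~\ref{lem cycle swap}. Given $(\tau, C, f')$, exchanging $f$ and $f'$ between the two parts yields a valid edge partition $\tau'$ with the same vertex partition, so $\tau' \in U$ and $C$ is again compatible with $\tau'$, now with $f'$ as its unique non-2-forest edge. Crucially, since both $f$ and $f'$ cross between $t_1$ and $t_2$, removing $f'$ from the spanning tree of $\tau'$ recovers the same pair $t_1, t_2$, so $f$ is itself an admissible crossing edge for $\tau'$ and $(\tau', C, f) \in Y$. The map $(\tau, C, f') \mapsto (\tau', C, f)$ is an involution because a second swap undoes the first, and it is fixed-point free because $f \neq f'$ forces $\tau \neq \tau'$ (the unique tree edge of $C$ has changed). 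Therefore $|Y|$ is even, and chasing the congruences back gives $|U| \equiv 0 \pmod{2}$, as required.

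The main obstacle is the bookkeeping rather than any single geometric fact. The cycle swap is \emph{not} an involution directly on edge partitions, nor even on pairs $(\tau, C)$: applying it forces a choice among several crossing edges, and because it preserves the vertex partition it can never move between the distinct sets of a family on its own. The device that resolves this is recording the chosen crossing edge as a third coordinate, so that the swap becomes genuinely involutive, while the two parity inputs — an odd number of compatible cycles per partition from Lemma~\ref{lem the odds we need}, and an odd number of admissible crossing edges per compatible cycle from Lemma~\ref{lem cycle swap} — are exactly what is needed to carry the parity of $|U|$ down to the level $Y$ on which the involution acts. The only routine point I would still verify is that the degree-sum hypotheses feeding Lemma~\ref{lem the odds we need} hold for every partition in each of the three families, since that is what makes the compatible-cycle count odd in all cases.
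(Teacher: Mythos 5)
Your proof is correct and is in essence the paper's own argument: for each of the three families the paper builds an auxiliary graph whose vertices are the edge partitions in the union and whose edges are the swaps of Lemma~\ref{lem cycle swap}, then uses Lemmas~\ref{lem the odds we need} and \ref{lem cycle swap} --- exactly your two parity inputs --- to show every vertex has odd degree, and concludes from the handshake lemma that the number of vertices, i.e.\ the size of the union, is even. Your chain $|U|\equiv|X|\equiv|Y|\pmod 2$ followed by the fixed-point-free involution $(\tau,C,f')\mapsto(\tau',C,f)$ is precisely that degree count unrolled; if anything it is slightly tidier, since carrying $C$ and $f'$ as explicit coordinates sidesteps the one point the paper must verify separately, namely that distinct cycles and distinct crossing edges give distinct neighbours in the auxiliary graph (so that no two swaps collapse onto the same edge).
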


\begin{proof}
  The construction is the same for all three cases cases.  It will be described explicitly in the $\mathcal{R}$ case.  Construct a graph $X_R$ as follows.  The vertices of $X_R$ are the edge partitions in $\mathcal{R}_{\{a,b,d,e,f\}, \{c\}}\cup \mathcal{R}_{\{a,c,d,e,f\}, \{b\}} \cup \mathcal{R}_{\{a\}, \{b,c,d,e,f\}} \cup \mathcal{R}_{\{a,b,c,d,e\}, \{f\}} \cup \mathcal{R}_{\{a,b,c,d,f\}, \{e\}} \cup \mathcal{R}_{\{a,b,c,e,f\}, \{d\}}$.  Two vertices of $X_R$ are adjacent if they are related by a swap as given in Lemma~\ref{lem cycle swap}.  Note that if a given edge assignment goes to another via such a swap then the second also goes to the first by such a swap since the cycle is compatible for either edge partition and the $t_1$, $t_2$ partition (in the notation of the proof of Lemma~\ref{lem cycle swap}) is also the same for both edge partitions.

  By Lemma~\ref{lem the odds we need}, for any vertex $x$ in $X_R$ there are an odd number of cycles which can yield swaps corresponding to edges incident to $x$.  Distinct cycles must give distinct swaps hence distinct edges.
  By Lemma~\ref{lem cycle swap}, each one of these cycles gives an odd number of edges incident to $x$ (one for each edge of the type described in the statement of Lemma~\ref{lem cycle swap} other than $f$ itself).  All edges incident to $x$ are obtained in this way so $x$ has odd degree.  This is true for all vertices of $X_R$, but by basic counting any graph has an even number of vertices of odd degree, so $X_R$ has an even number of vertices.

  Therefore the union of $\mathcal{R}$ sets defining $X_R$ has even size which is the first statement of the lemma.

  The argument for $X_S$ and $X_T$ is analogous using the edge partitions in $\mathcal{S}_{\{a,b,d\}, \{c,e\}} \cup \mathcal{S}_{\{a,b,e\}, \{c,d\}} \cup \mathcal{S}_{\{a,c\}, \{b,d,e\}} \cup \mathcal{S}_{\{b,c\}, \{a,d,e\}}$ and $\mathcal{T}_{\{a\}, \{b,c,d\}} \cup \mathcal{T}_{\{a,b,c\}, \{d\}}$ respectively.
\end{proof}

Note that the construction of $X_R$, $X_S$, and $X_T$ is closely related to the spanning tree graph (often just called the \emph{tree graph}, see \cite{CtreeG}) construction.  The vertices of the tree graph of a graph $G$ are the spanning trees of $G$ and two vertices of the tree graph are joined by an edge if the two spanning trees differ by removing one edge and replacing it with another.

\medskip

This is all we need to prove the main theorem.
\begin{proof}[Proof of Theorem~\ref{thm main}]
  As discussed at the beginning of section~\ref{sec reduction} it suffices to consider $v$ and $w$ joined by an edge and with zero, one, or two common neighbours.
  
  By Proposition~\ref{prop A expansion} we need only check that the parity of a certain sum of $r_{p_1,p_2}$ is even in the case that $v$ and $w$ have no common neighbours or a certain sum of $s_{p_1,p_2}$ is even in the case that $v$ and $w$ have one common neighbour, or a certain sum of $t_{p_1, p_2}$ is even in the case that $v$ and $w$ have two common neighbours.

  In the case that $v$ and $w$ have two common neighbours, the third part of Lemma~\ref{lem do cycle swap} gives that the required sum is even.
  
  In the case that $v$ and $w$ have one common neighbour, Lemma~\ref{lem B vert swap} and the second part of Lemma~\ref{lem do cycle swap} give that the required sum is even.
  
  In the case that $v$ and $w$ have no common neighbours, Lemma~\ref{lem A vert swap} and the first part of Lemma~\ref{lem do cycle swap} give that the required sum is even.
\end{proof}

\section{Discussion}\label{sec conclusion}

It should be the case that Theorem~\ref{thm main} is true for all $p$ and without the restriction that $K$ has an odd number of vertices, see conjecture 4 of \cite{BrS}.  This would be an excellent conjecture to prove because it would support the deep connection between the $c_2$ invariant and the Feynman period.  It is a surprisingly difficult and very interesting conjecture.

The restriction to $p=2$ came about because Lemma~\ref{lem cor of CW} is much simplified in the $p=2$ case.  For higher values of $p$ there is a still an edge assignment interpretation as discussed at the end of Section~\ref{sec background}, but each edge must be assigned $p-1$ times to build $p-1$ spanning trees and $p-1$ spanning 2-forests.  This greatly increases the complexity.  However, in principle these ideas may be extendable to $p>2$.  The practicalities would certainly be very hairy.  The question is whether or not the practicalities would be so hairy as to render the approach unworkable.

The vertex parity condition is more mysterious.  Note that it is only needed for the arguments of Section~\ref{sec compat cycles} not for the arguments of Section~\ref{sec involutions}.   If the number of vertices of $K$ were even then the required degree sum parity in Lemma~\ref{lem odd and odd} would also need to change to preserve the total number of compatible cycles being odd.  This translates into the methods of Section~\ref{sec compat cycles} applying to the other $\mathcal{R}$, $\mathcal{S}$, and $\mathcal{T}$ sets, namely the ones we already know how to tackle by Section~\ref{sec involutions}.  So if $K$ had an even number of vertices then the terms which in the odd case matched by swapping around the control vertex would become the terms which are even by compatible cycles, while the terms which used be even by compatible cycles would need a new argument.  The first place to look for this new argument would be as a generalized control vertex argument, however it is not clear how to do it. There is no obvious obstruction, rather the required construction is simply not apparent and so some further cleverness is required to progress.

In the $T$ case, that is the double triangle case where $v$ and $w$ have two common neighbours, things are sufficiently special that we can extend the argument to remove the parity condition.  This was graciously pointed out by a referee.  This argument manages to succeed by running through additional $\mathcal{T}$ sets which do not appear directly in the expression for $c_2$ but which occur while swapping.  I had tried similar things in other cases without finding a path to the result, but the approach remains promising as this argument shows.  Consider $\tau \in \mathcal{T}_{\{a\}, \{b,c,d\}}\cup\mathcal{T}_{\{a,b\}, \{c,d\}}$.  The vertex $b$ is 2-valent and one of its incident edges is in each of the tree and the 2-forest of $\tau$ since in both cases $b$ must connect to other vertices.  Thus by the same argument as Lemma~\ref{lem swap} we can swap the edges around $b$ and obtain a fixed-point free involution on $\mathcal{T}_{\{a\}, \{b,c,d\}}\cup\mathcal{T}_{\{a,b\}, \{c,d\}}$.  Therefore 
\[
t_{\{a\}, \{b,c,d\}} + t_{\{a,b\}, \{c,d\}} = 0 \mod 2.
\]
Symmetrically by swapping around vertex $c$ in  $\mathcal{T}_{\{a,b,c\}, \{d\}}\cup\mathcal{T}_{\{a,b\}, \{c,d\}}$ we get
\[
t_{\{a,b,c\}, \{d\}} + t_{\{a,b\}, \{c,d\}} = 0 \mod 2.
\]
Adding the two equations along with Proposition~\ref{prop A expansion} we get
\begin{align*}
c_2^{(2)}(K-\{v\}) - c_2^{(2)}(K-\{w\}) & = t_{\{a\}, \{b,c,d\}} + t_{\{a,b,c\},\{d\}} \mod 2 \\
& = 0 \mod 2.
\end{align*}

The statement and proof of Theorem~\ref{thm main} arose out of discussions with Dmitry Doryn  about looking for special cases where we could hope to progress on understanding the $c_2$ invariant.  The first version of the result had many additional hypotheses including planarity and being restricted to the $S$-case.  With some work most of the extra hypotheses dropped away and the current Theorem~\ref{thm main} remained.  It is not an entirely satisfactory theorem as it stands, but it introduces some new ideas to the game and makes nontrivial progress on the completion conjecture for the $c_2$ invariant for almost the first time. I hope that as the first crack in that conjecture it will lead, with the help of others, to a proof of the full conjecture.

\bibliographystyle{plain}
\bibliography{main}

\end{document}